\providecommand{\U}[1]{\protect\rule{.1in}{.1in}}
\theoremstyle{definition}
\newtheorem{theo}{Theorem}[section]
\newenvironment{theorem}[1][]
{\begin{theo}[#1]\begin{leftbar}}
{\end{leftbar}\end{theo}}
\newtheorem{lem}[theo]{Lemma}
\newenvironment{lemma}[1][]
{\begin{lem}[#1]\begin{leftbar}}
{\end{leftbar}\end{lem}}
\newtheorem{prop}[theo]{Proposition}
\newenvironment{proposition}[1][]
{\begin{prop}[#1]\begin{leftbar}}
{\end{leftbar}\end{prop}}
\newtheorem{defi}[theo]{Definition}
\newtheorem{remk}[theo]{Remark}
\newtheorem{coro}[theo]{Corollary}
\newenvironment{corollary}[1][]
{\begin{coro}[#1]\begin{leftbar}}
{\end{leftbar}\end{coro}}
\newtheorem{conj}[theo]{Conjecture}
\newtheorem{exam}[theo]{Example}
\newenvironment{example}[1][]
{\begin{exam}[#1]\begin{leftbar}}
{\end{leftbar}\end{exam}}
\newtheorem{questn}[theo]{Question}
\newenvironment{quest}[1][]
{\begin{questn}[#1]\begin{leftbar}}
{\end{leftbar}\end{questn}}
\let\sumnonlimits\sum
\let\prodnonlimits\prod
\let\cupnonlimits\bigcup
\let\capnonlimits\bigcap
\renewcommand{\sum}{\sumnonlimits\limits}
\renewcommand{\prod}{\prodnonlimits\limits}
\renewcommand{\bigcup}{\cupnonlimits\limits}
\renewcommand{\bigcap}{\capnonlimits\limits}
\newenvironment{verlong}{}{}
\newenvironment{vershort}{}{}
\begin{document}

\title{A double Sylvester determinant}
\author{Darij Grinberg}
\date{
\today
}
\maketitle

\begin{abstract}
\textbf{Abstract.} Given two $\left(  n+1\right)  \times\left(  n+1\right)
$-matrices $A$ and $B$ over a commutative ring, and some $k\in\left\{
0,1,\ldots,n\right\}  $, we consider the $\dbinom{n}{k}\times\dbinom{n}{k}%
$-matrix $W$ whose entries are $\left(  k+1\right)  \times\left(  k+1\right)
$-minors of $A$ multiplied by corresponding $\left(  k+1\right)  \times\left(
k+1\right)  $-minors of $B$. Here we require the minors to use the last row
and the last column (which is why we obtain an $\dbinom{n}{k}\times\dbinom
{n}{k}$-matrix, not a $\dbinom{n+1}{k+1}\times\dbinom{n+1}{k+1}$-matrix). We
prove that the determinant $\det W$ is a multiple of $\det A$ if the $\left(
n+1,n+1\right)  $-th entry of $B$ is $0$. Furthermore, if the $\left(
n+1,n+1\right)  $-th entries of both $A$ and $B$ are $0$, then $\det W$ is a
multiple of $\left(  \det A\right)  \left(  \det B\right)  $. This extends a
previous result of Olver and the author.

\textbf{Mathematics Subject Classification:} 15A15, 11C20.

\textbf{Keywords:} determinant, compound matrix, Sylvester's determinant, polynomials.

\end{abstract}
\tableofcontents

\section{Introduction}

Let $n$ and $k$ be nonnegative integers, and let $A=\left(  a_{i,j}\right)
_{1\leq i\leq n+1,\ 1\leq j\leq n+1}$ be an $\left(  n+1\right)  \times\left(
n+1\right)  $-matrix over some commutative ring. Let $P_{k}$ be the set of all
$k$-element subsets of $\left\{  1,2,\ldots,n\right\}  $. For any such subset
$K\in P_{k}$, let $K+$ denote the subset $K\cup\left\{  n+1\right\}  $ of
$\left\{  1,2,\ldots,n+1\right\}  $. If $U$ and $V$ are two subsets of
$\left\{  1,2,\ldots,n+1\right\}  $, then $\operatorname*{sub}\nolimits_{U}%
^{V}A$ shall denote the $\left\vert U\right\vert \times\left\vert V\right\vert
$-submatrix of $A$ containing only the entries $a_{u,v}$ with $u\in U$ and
$v\in V$. Let $W_{A}$ be the $P_{k}\times P_{k}$-matrix\footnote{This means a
matrix whose rows and columns are indexed by the $k$-element subsets of
$\left\{  1,2,\ldots,n\right\}  $. If you pick a total order on the set
$P_{k}$, then you can view such a matrix as an $\dbinom{n}{k}\times\dbinom
{n}{k}$-matrix.} whose $\left(  I,J\right)  $-th entry (for all $I\in P_{k}$
and $J\in P_{k}$) is
\[
\det\left(  \operatorname*{sub}\nolimits_{I+}^{J+}A\right)  .
\]
(Thus, the entries of $W_{A}$ are all $\left(  k+1\right)  \times\left(
k+1\right)  $-minors of $A$ that use the last row and the last column.) A
particular case of a celebrated result going back to Sylvester \cite{Sylves51}
(see \cite[\S 2.7]{Prasolov} or \cite[Teorema 2.9.1]{Prasol15} or
\cite{Mohr53} for modern proofs) then says that%
\[
\det\left(  W_{A}\right)  =a_{n+1,n+1}^{p}\cdot\left(  \det A\right)
^{q},\qquad\text{where }p=\dbinom{n-1}{k}\text{ and }q=\dbinom{n-1}{k-1}.
\]

Now, consider a second $\left(  n+1\right)  \times\left(  n+1\right)  $-matrix
$B=\left(  b_{i,j}\right)  _{1\leq i\leq n+1,\ 1\leq j\leq n+1}$ over the same
ring. Let $W_{A,B}$ (later to be just called $W$) be the $P_{k}\times P_{k}%
$-matrix whose $\left(  I,J\right)  $-th entry (for all $I\in P_{k}$ and $J\in
P_{k}$) is
\[
\det\left(  \operatorname*{sub}\nolimits_{I+}^{J+}A\right)  \det\left(
\operatorname*{sub}\nolimits_{I+}^{J+}B\right)  .
\]
What can be said about $\det\left(  W_{A,B}\right)  $ ? In general, very
little\footnote{For example, if $n=3$ and $k=2$, then $\det\left(
W_{A,B}\right)  $ is an irreducible polynomial in the (altogether $2\left(
n+1\right)  ^{2}=32$) variables $a_{i,j}$ and $b_{i,j}$ with $110268$
monomials.}. However, under some assumptions, it splits off factors. Namely,
we shall show (Theorem \ref{thm.bisyl.B0}) that $\det\left(  W_{A,B}\right)  $
is a multiple of $\det A$ if $b_{n+1,n+1}=0$. We shall then conclude (Theorem
\ref{thm.bisyl.AB0}) that if both $a_{n+1,n+1}$ and $b_{n+1,n+1}$ are $0$,
then $\det\left(  W_{A,B}\right)  $ is a multiple of $\left(  \det A\right)
\left(  \det B\right)  $. In either case, the quotient (usually a much more
complicated polynomial\footnote{again irreducible in the case when $n=3$ and
$k=2$}) remains mysterious; our proofs are indirect and reveal little about
it. Our second result generalizes a curious property of $\dbinom{n}{2}%
\times\dbinom{n}{2}$-determinants \cite[Theorem 10]{GriOlv18} that arose from
the study of the n-body problem (see Example \ref{exa.k=2} for details).

\subsection*{Acknowledgments}

I thank Christian Krattenthaler, Peter Olver and Victor Reiner for
enlightening discussions, and Peter Olver for the joint work that led to this
paper. The SageMath computer algebra system \cite{SageMath} has been used for
experimentation leading up to some of the results below.

\section{The theorems}

Let us first introduce the standing notations.

Let $\mathbb{N}=\left\{  0,1,2,\ldots\right\}  $. Let $\mathbb{K}$ be a
commutative ring. If $a$ and $b$ are two elements of $\mathbb{K}$, then we
write $a\mid b$ when $b$ is a multiple of $a$ (that is, $b\in\mathbb{K}a$).

If $m\in\mathbb{N}$, then $\left[  m\right]  $ shall mean the set $\left\{
1,2,\ldots,m\right\}  $.

Fix an $n\in\mathbb{N}$. If $K$ is any subset of $\left[  n\right]  $, then
$K+$ shall mean the subset $K\cup\left\{  n+1\right\}  $ of $\left[
n+1\right]  $.

Fix $k\in\left\{  0,1,\ldots,n\right\}  $. Let $P_{k}$ be the set of all
$k$-element subsets of $\left[  n\right]  $. This is a finite set; thus, any
$P_{k}\times P_{k}$-matrix (i.e., any matrix whose rows and columns are
indexed by $k$-element subsets of $\left[  n\right]  $) has a well-defined
determinant\footnote{Here, we are using the concepts of $P \times P$-matrices
(where $P$ is a finite set) and their determinants. Both of these concepts are
folklore; a brief introduction can be found in \cite[\S 1]{Grinbe18}.}. Such
matrices appear frequently in classical determinant theory (see, e.g., the
\textquotedblleft$k$-th compound determinants\textquotedblright\ in
\cite{MuiMet60} and in \cite[\S 2.6]{Prasolov}, as well as the related
\textquotedblleft Generalized Sylvester's identity\textquotedblright\ in
\cite[\S 2.7]{Prasolov} and \cite[Teorema 2.9.1]{Prasol15} and \cite{Mohr53}).

If $A\in\mathbb{K}^{u\times v}$ is a $u\times v$-matrix, and if $I\subseteq
\left[  u\right]  $ and $J\subseteq\left[  v\right]  $, then
$\operatorname*{sub}\nolimits_{I}^{J}A$ shall mean the submatrix of $A$
obtained by removing all rows whose indices are not in $I$ and removing all
columns whose indices are not in $J$. (Rigorously speaking, if $A=\left(
a_{i,j}\right)  _{1\leq i\leq u,\ 1\leq j\leq v}$ and $I=\left\{  i_{1}%
<i_{2}<\cdots<i_{p}\right\}  $ and $J=\left\{  j_{1}<j_{2}<\cdots
<j_{q}\right\}  $, then $\operatorname*{sub}\nolimits_{I}^{J}A=\left(
a_{i_{x},j_{y}}\right)  _{1\leq x\leq p,\ 1\leq y\leq q}$.) When $\left\vert
I\right\vert =\left\vert J\right\vert $, then the submatrix
$\operatorname*{sub}\nolimits_{I}^{J}A$ is square; its determinant
$\det\left(  \operatorname*{sub}\nolimits_{I}^{J}A\right)  $ is called a
\textit{minor} of $A$.

Our main two results are the following:

\begin{theorem}
\label{thm.bisyl.B0}Let $A=\left(  a_{i,j}\right)  _{1\leq i\leq n+1,\ 1\leq
j\leq n+1}\in\mathbb{K}^{\left(  n+1\right)  \times\left(  n+1\right)  }$ and
$B=\left(  b_{i,j}\right)  _{1\leq i\leq n+1,\ 1\leq j\leq n+1}\in
\mathbb{K}^{\left(  n+1\right)  \times\left(  n+1\right)  }$ be such that
$b_{n+1,n+1}=0$. Let $W$ be the $P_{k}\times P_{k}$-matrix whose $\left(
I,J\right)  $-th entry (for all $I\in P_{k}$ and $J\in P_{k}$) is%
\[
\det\left(  \operatorname*{sub}\nolimits_{I+}^{J+}A\right)  \det\left(
\operatorname*{sub}\nolimits_{I+}^{J+}B\right)  .
\]
Then, $\det A\mid\det W$.
\end{theorem}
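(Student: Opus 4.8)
\emph{Sketch of the intended approach.} The plan is to pass to the universal instance and then specialize. Let $\mathbb{K}_{0}=\mathbb{Z}\left[a_{i,j},b_{i,j}\right]/\left(b_{n+1,n+1}\right)$ be the polynomial ring in the $2\left(n+1\right)^{2}-1$ indeterminate entries (with $b_{n+1,n+1}$ set to $0$), and let $A,B$ be the corresponding generic matrices. Any instance of the theorem is the image of this one under a $\mathbb{Z}$-algebra homomorphism, and such homomorphisms preserve divisibility; so it suffices to prove $\det A\mid\det W$ in $\mathbb{K}_{0}$. Now $\mathbb{K}_{0}$ is a unique factorization domain and $\det A$ is irreducible in it (the determinant of a generic square matrix is irreducible, and adjoining the algebraically independent $b_{i,j}$ keeps it so). Hence $\det A$ is prime, and $\det A\mid\det W$ is equivalent to the vanishing of $\det W$ modulo $\left(\det A\right)$, i.e.\ to the vanishing of $\det W$ at the generic point of the hypersurface $\left\{\det A=0\right\}$. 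Since the matrices of rank $n$ are Zariski-dense in this hypersurface, the theorem reduces to the claim that \emph{$\det W=0$ whenever $A$ has rank $n$ and $b_{n+1,n+1}=0$}. (The extreme cases $k=0$ and $k=n$, where $W$ is the $1\times1$ matrix with entry $a_{n+1,n+1}b_{n+1,n+1}=0$ respectively $\left(\det A\right)\left(\det B\right)$, are immediate, so I assume $1\le k\le n-1$.)

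Next I would factor the two families of minors separately. Write $W=W_{A}\circ W_{B}$ for the entrywise (Hadamard) product of the two $P_{k}\times P_{k}$-matrices with entries $\det\operatorname*{sub}\nolimits_{I+}^{J+}A$ and $\det\operatorname*{sub}\nolimits_{I+}^{J+}B$; by Sylvester's identity recalled above, $\det W_{A}=a_{n+1,n+1}^{\binom{n-1}{k}}\left(\det A\right)^{\binom{n-1}{k-1}}=0$, so $W_{A}$ is already singular. Write $P_{k+1}$ and $P_{k-1}$ for the sets of $\left(k+1\right)$- and $\left(k-1\right)$-element subsets of $\left[n\right]$. On the $A$-side, a rank-$n$ matrix factors as $A=XY$ with $X\in\mathbb{K}_{0}^{\left(n+1\right)\times n}$ and $Y\in\mathbb{K}_{0}^{n\times\left(n+1\right)}$, and Cauchy--Binet gives
\[
\det\operatorname*{sub}\nolimits_{I+}^{J+}A=\sum_{L\in P_{k+1}}\det\left(\operatorname*{sub}\nolimits_{I+}^{L}X\right)\det\left(\operatorname*{sub}\nolimits_{L}^{J+}Y\right),
\]
so $W_{A}=\mathbf{X}\mathbf{Y}$ is a product through the index set $P_{k+1}$. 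On the $B$-side, expanding $\det\operatorname*{sub}\nolimits_{I+}^{J+}B$ along its last row and using $b_{n+1,n+1}=0$ to kill the corner term gives
\[
\det\operatorname*{sub}\nolimits_{I+}^{J+}B=\sum_{c\in J}\pm\,b_{n+1,c}\,\det\left(\operatorname*{sub}\nolimits_{I}^{\left(J\setminus c\right)+}B\right),
\]
so $W_{B}=\Phi\widetilde{R}$ is a product through the index set $P_{k-1}$, where $\Phi_{I,M}=\det\operatorname*{sub}\nolimits_{I}^{M+}B$.

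Substituting both factorizations turns $W=W_{A}\circ W_{B}$ into a genuine matrix product $W=\mathcal{U}\mathcal{V}$, with
\[
\mathcal{U}_{I,\left(L,M\right)}=\det\left(\operatorname*{sub}\nolimits_{I+}^{L}X\right)\det\left(\operatorname*{sub}\nolimits_{I}^{M+}B\right),
\]
the inner index $\left(L,M\right)$ running over $P_{k+1}\times P_{k-1}$, and a matching $\mathcal{V}$; Cauchy--Binet then writes $\det W$ as a signed sum over $\binom{n}{k}$-element subsets of $P_{k+1}\times P_{k-1}$. \textbf{This is where the real difficulty lies.} For the bulk of the range $1\le k\le n-1$ the inner index set $P_{k+1}\times P_{k-1}$ is strictly \emph{larger} than $P_{k}$, so --- unlike the single-matrix Sylvester identity, where one honest factor has rank below $\binom{n}{k}$ --- the Hadamard structure defeats the naive estimate $\operatorname{rank}\left(W_{A}\circ W_{B}\right)\le\left(\operatorname{rank}W_{A}\right)\left(\operatorname{rank}W_{B}\right)$, which does not drop below $\binom{n}{k}$. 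The vanishing of $\det W$ is thus not a rank phenomenon but a genuine cancellation among the Cauchy--Binet terms, and producing it is the heart of the proof; I would attempt it through a sign-reversing involution pairing the surviving terms, the pairing being driven by the two appearances of the distinguished index $n+1$ (via $\det A=0$ on the $A$-side and via $b_{n+1,n+1}=0$ on the $B$-side).

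A structurally cleaner home for the same cancellation, which I would pursue in parallel, is the observation that $W$ is itself a submatrix of the $\left(2k+2\right)$-nd compound matrix of the block-diagonal matrix $A\oplus B\in\mathbb{K}_{0}^{2\left(n+1\right)\times2\left(n+1\right)}$: indeed $\det\operatorname*{sub}\nolimits_{I+}^{J+}A\cdot\det\operatorname*{sub}\nolimits_{I+}^{J+}B=\det\operatorname*{sub}\nolimits_{\mathcal{I}}^{\mathcal{J}}\left(A\oplus B\right)$ for the ``diagonal'' index sets $\mathcal{I}=\left(I+\right)\cup\left\{\,n+1+x:x\in I+\,\right\}$ and likewise $\mathcal{J}$. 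Every such $\mathcal{I}$ contains both distinguished indices $n+1$ and $2n+2$, and $\det\left(A\oplus B\right)=\left(\det A\right)\left(\det B\right)$, so $W$ is a diagonal-restricted ``doubly bordered'' compound of $A\oplus B$; I expect a Sylvester-type identity adapted to this diagonal restriction to split off the factor $\det A$ directly, which would also shed light on the otherwise mysterious quotient.
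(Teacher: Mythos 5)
Your reduction to the generic case is sound and runs parallel to the paper's own strategy: pass to the polynomial ring over $\mathbb{Z}$, use irreducibility (hence primality) of $\det A$, and reduce the divisibility $\det A\mid\det W$ to the vanishing of $\det W$ in the integral domain obtained by killing $\det A$ (the paper does this slightly differently, first localizing at $a_{n+1,n+1}$ and clearing the last row of $A$ by column operations, then working modulo the irreducible $n\times n$ determinant $\det\overline{A}$; your Zariski-density detour through rank-$n$ matrices over $\mathbb{C}$ would also need a small content/Gauss argument to descend to $\mathbb{Z}$, but that is routine). The genuine gap is that you stop exactly at the step that constitutes the proof. You correctly observe that the Hadamard factorization $W=W_{A}\circ W_{B}$ only bounds $\operatorname{rank}W$ by $\binom{n}{k+1}\binom{n}{k-1}$, which does not drop below $\binom{n}{k}$, and you then defer the required cancellation to an unspecified sign-reversing involution or to a hoped-for Sylvester-type identity for diagonal restrictions of compounds of $A\oplus B$. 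Neither is carried out, and neither is obviously within reach; as it stands the argument proves nothing beyond the (insufficient) rank bound.

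What actually closes the gap in the paper is an explicit nonzero kernel vector, not a rank or involution argument. After reducing to $a_{n+1,j}=0$ for $j\in\left[n\right]$, one works in $\mathbb{M}=\mathbb{K}/\left(\det\overline{A}\right)$ and sets $u_{i}=z_{i}\left(-1\right)^{i}\det\left(A_{\sim1,\sim i}\right)$ for $i\in\left[n\right]$ (with $z_{i}=\prod_{j\neq i}b_{n+1,j}$) and $u_{n+1}=1$; putting $\Delta=\operatorname*{diag}\left(u_{1},\ldots,u_{n+1}\right)$, the Laplace/adjugate identity gives $A\Delta B^{T}e_{n+1}=-w\cdot\det A\cdot e_{1}=0$ in $\mathbb{M}$, so every minor $\det\left(\operatorname*{sub}\nolimits_{I+}^{I+}\left(A\Delta B^{T}\right)\right)$ vanishes; expanding that minor by Cauchy--Binet and discarding the terms with $n+1\notin R$ yields exactly $\sum_{J\in P_{k}}\det\left(\operatorname*{sub}\nolimits_{I+}^{J+}A\right)\det\left(\operatorname*{sub}\nolimits_{I+}^{J+}B\right)u_{J}=0$ with $u_{J}=\prod_{j\in J}u_{j}\neq0$, i.e.\ $W\mathbf{u}=0$ with $\mathbf{u}\neq0$ over an integral domain, whence $\det W=0$ there. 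If you want to salvage your write-up, this kernel vector (or an equivalent explicit witness of singularity of $W$ on the locus $\det A=0$) is the missing ingredient you must supply; without it the proposal is an outline, not a proof.
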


\begin{theorem}
\label{thm.bisyl.AB0}Let $A=\left(  a_{i,j}\right)  _{1\leq i\leq n+1,\ 1\leq
j\leq n+1}\in\mathbb{K}^{\left(  n+1\right)  \times\left(  n+1\right)  }$ and
$B=\left(  b_{i,j}\right)  _{1\leq i\leq n+1,\ 1\leq j\leq n+1}\in
\mathbb{K}^{\left(  n+1\right)  \times\left(  n+1\right)  }$ be such that
$a_{n+1,n+1}=0$ and $b_{n+1,n+1}=0$. Define the $P_{k}\times P_{k}$-matrix $W$
as in Theorem \ref{thm.bisyl.B0}. Then, $\left(  \det A\right)  \left(  \det
B\right)  \mid\det W$.
\end{theorem}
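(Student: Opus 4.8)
The plan is to reduce the statement to a single \emph{universal} instance and then exploit unique factorization. First I would pass to the polynomial ring
\[
R=\mathbb{Z}\left[  a_{i,j},\,b_{i,j}:1\leq i,j\leq n+1,\ \left(  i,j\right)
\neq\left(  n+1,n+1\right)  \right]  ,
\]
regard $A$ and $B$ as matrices over $R$ whose entries are the corresponding indeterminates, except for the prescribed zeros $a_{n+1,n+1}=0$ and $b_{n+1,n+1}=0$, and build $W$ from these generic matrices. Every pair $\left(  A,B\right)  $ over an arbitrary commutative ring $\mathbb{K}$ satisfying the hypotheses is the image of this universal pair under a (unique) ring homomorphism $R\to\mathbb{K}$, and such a homomorphism carries a polynomial identity of the form $\det W=\left(  \det A\right)  \left(  \det B\right)  \cdot Q$ over $R$ to the analogous identity over $\mathbb{K}$. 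Hence it suffices to prove $\left(  \det A\right)  \left(  \det B\right)  \mid\det W$ in $R$, which has the advantage of being a UFD.

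Next I would invoke Theorem \ref{thm.bisyl.B0} twice. Applied directly (its hypothesis $b_{n+1,n+1}=0$ holds), it yields $\det A\mid\det W$. The matrix $W$ is manifestly unchanged when we interchange $A$ and $B$, since its $\left(  I,J\right)  $-th entry $\det\left(  \operatorname*{sub}\nolimits_{I+}^{J+}A\right)  \det\left(  \operatorname*{sub}\nolimits_{I+}^{J+}B\right)  $ is symmetric in $A$ and $B$. Therefore, applying Theorem \ref{thm.bisyl.B0} to the pair $\left(  B,A\right)  $ in place of $\left(  A,B\right)  $ — whose relevant hypothesis is now $a_{n+1,n+1}=0$, which also holds — gives $\det B\mid\det W$ for the very same $W$.

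The final and conceptually crucial step is to combine these two divisibilities into one. In the UFD $R$ the polynomials $\det A$ and $\det B$ are coprime: $\det A$ involves only the variables $a_{i,j}$, whereas $\det B$ involves only the disjoint set of variables $b_{i,j}$. Concretely, viewing $\det A\in\mathbb{Z}\left[  a_{i,j}\right]  $ as a polynomial of $b$-degree $0$, all of its irreducible factors in $R$ again have $b$-degree $0$, so they lie in $\mathbb{Z}\left[  a_{i,j}\right]  $; symmetrically for $\det B$. A common factor would therefore lie in $\mathbb{Z}\left[  a_{i,j}\right]  \cap\mathbb{Z}\left[  b_{i,j}\right]  =\mathbb{Z}$, and since $\det A$ is primitive (all its monomial coefficients are $\pm1$), such a factor must be a unit; hence $\gcd\left(  \det A,\det B\right)  =1$. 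Coprimality together with $\det A\mid\det W$ and $\det B\mid\det W$ forces $\left(  \det A\right)  \left(  \det B\right)  \mid\det W$ in $R$, and descending along $R\to\mathbb{K}$ completes the argument. The degenerate cases in which $\det A$ or $\det B$ vanishes (notably $n=0$, forcing $k=0$ and $\det W=0$) are checked directly.

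I expect the main obstacle to be precisely this last passage. Over a general commutative ring the naive implication ``$\det A\mid\det W$ and $\det B\mid\det W$'' $\Rightarrow$ ``$\left(  \det A\right)  \left(  \det B\right)  \mid\det W$'' is simply \emph{false}, so the whole argument hinges on first moving to the generic polynomial setting, where coprimality becomes available. The technical heart is therefore the verification that $\det A$ and $\det B$ share no nontrivial common factor — not merely that they involve disjoint variables — which is what the primitivity and degree bookkeeping above are designed to secure.
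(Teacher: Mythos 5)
Your proposal is correct and follows essentially the same route as the paper's own proof: reduce to the generic polynomial ring over $\mathbb{Z}$ (a UFD), apply Theorem \ref{thm.bisyl.B0} twice using the symmetry of $W$ in $A$ and $B$, and combine the two divisibilities via the coprimality of $\det A$ and $\det B$ (disjoint variables plus primitivity, which is exactly the paper's Proposition \ref{prop.content.coprime}). Your explicit warning that the final implication fails over a general commutative ring, and the $n=0$ degenerate check, are likewise present in the paper's argument.
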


\begin{example}
\label{exa.k=1}For this example, set $k=1$. Then, $P_{k}=P_{1}=\left\{
\left\{  1\right\}  ,\left\{  2\right\}  ,\ldots,\left\{  n\right\}  \right\}
$. Thus, the map%
\[
\left[  n\right]  \rightarrow P_{k},\ \ \ \ \ \ \ \ \ \ i\mapsto\left\{
i\right\}
\]
is a bijection. Use this bijection to identify the elements $1,2,\ldots,n$ of
$\left[  n\right]  $ with the elements $\left\{  1\right\}  ,\left\{
2\right\}  ,\ldots,\left\{  n\right\}  $ of $P_{k}$. Thus, the $P_{k}\times
P_{k}$-matrix $W$ in Theorem \ref{thm.bisyl.B0} becomes the $n\times n$-matrix%
\begin{align*}
&  \left(  \underbrace{\det\left(  \operatorname*{sub}\nolimits_{\left\{
i\right\}  +}^{\left\{  j\right\}  +}A\right)  }_{=a_{i,j}a_{n+1,n+1}%
-a_{i,n+1}a_{n+1,j}}\ \ \underbrace{\det\left(  \operatorname*{sub}%
\nolimits_{\left\{  i\right\}  +}^{\left\{  j\right\}  +}B\right)  }%
_{=b_{i,j}b_{n+1,n+1}-b_{i,n+1}b_{n+1,j}}\right)  _{1\leq i\leq n,\ 1\leq
j\leq n}\\
&  =\left(  \left(  a_{i,j}a_{n+1,n+1}-a_{i,n+1}a_{n+1,j}\right)  \left(
b_{i,j}\underbrace{b_{n+1,n+1}}_{=0}-b_{i,n+1}b_{n+1,j}\right)  \right)
_{1\leq i\leq n,\ 1\leq j\leq n}\\
&  =\left(  \left(  a_{i,j}a_{n+1,n+1}-a_{i,n+1}a_{n+1,j}\right)  \left(
-b_{i,n+1}b_{n+1,j}\right)  \right)  _{1\leq i\leq n,\ 1\leq j\leq n}.
\end{align*}
This is the matrix obtained from $\left(  \left(  a_{i,j}a_{n+1,n+1}%
-a_{i,n+1}a_{n+1,j}\right)  \right)  _{1\leq i\leq n,\ 1\leq j\leq n}$ by
multiplying the $i$-th row with $-b_{i,n+1}$ for all $i\in\left[  n\right]  $
and multiplying the $j$-th column with $b_{n+1,j}$ for all $j\in\left[
n\right]  $. Thus, the claim of Theorem \ref{thm.bisyl.B0} follows from the
classical fact that%
\[
\det\left(  \left(  a_{i,j}a_{n+1,n+1}-a_{i,n+1}a_{n+1,j}\right)  _{1\leq
i\leq n,\ 1\leq j\leq n}\right)  =a_{n+1,n+1}^{n-1}\cdot\det A.
\]
This fact is known as Chio pivotal condensation (see, e.g., \cite[Theorem
0.1]{KarZha16}), and is a particular case of Sylvester's identity
(\cite[\S 2.7]{Prasolov}).
\end{example}

\begin{example}
\label{exa.k=2}For this example, set $k=2$, and consider the situation of
Theorem \ref{thm.bisyl.B0} again. Then, $P_{k}=P_{2}=\left\{  \left\{
i,j\right\}  \ \mid\ 1\leq i<j\leq n\right\}  $. If $\left\{  i,j\right\}  \in
P_{2}$ and $\left\{  k,l\right\}  \in P_{2}$ satisfy $i<j$ and $k<l$, then the
$\left(  \left\{  i,j\right\}  ,\left\{  k,l\right\}  \right)  $-th entry of
$W$ is%
\begin{align*}
&  \det\underbrace{\left(  \operatorname*{sub}\nolimits_{\left\{  i,j\right\}
+}^{\left\{  k,l\right\}  +}A\right)  }_{=\left(
\begin{array}
[c]{ccc}%
a_{i,k} & a_{i,l} & a_{i,n+1}\\
a_{j,k} & a_{j,l} & a_{j,n+1}\\
a_{n+1,k} & a_{n+1,l} & a_{n+1,n+1}%
\end{array}
\right)  }\ \ \det\underbrace{\left(  \operatorname*{sub}\nolimits_{\left\{
i,j\right\}  +}^{\left\{  k,l\right\}  +}B\right)  }_{\substack{=\left(
\begin{array}
[c]{ccc}%
b_{i,k} & b_{i,l} & b_{i,n+1}\\
b_{j,k} & b_{j,l} & b_{j,n+1}\\
b_{n+1,k} & b_{n+1,l} & 0
\end{array}
\right)  \\\text{(since }b_{n+1,n+1}=0\text{)}}}\\
&  =\det\left(
\begin{array}
[c]{ccc}%
a_{i,k} & a_{i,l} & a_{i,n+1}\\
a_{j,k} & a_{j,l} & a_{j,n+1}\\
a_{n+1,k} & a_{n+1,l} & a_{n+1,n+1}%
\end{array}
\right)  \det\left(
\begin{array}
[c]{ccc}%
b_{i,k} & b_{i,l} & b_{i,n+1}\\
b_{j,k} & b_{j,l} & b_{j,n+1}\\
b_{n+1,k} & b_{n+1,l} & 0
\end{array}
\right)  .
\end{align*}
If we furthermore assume that
\begin{align*}
a_{n+1,n+1}  &  =0,\ \ \ \ \ \ \ \ \ \ \text{and}\\
a_{n+1,i}  &  =a_{i,n+1}=1\ \ \ \ \ \ \ \ \ \ \text{for all }i\in\left[
n\right]  ,\ \ \ \ \ \ \ \ \ \ \text{and}\\
b_{n+1,i}  &  =b_{i,n+1}=1\ \ \ \ \ \ \ \ \ \ \text{for all }i\in\left[
n\right]  ,
\end{align*}
then this entry rewrites as%
\begin{align*}
&  \underbrace{\det\left(
\begin{array}
[c]{ccc}%
a_{i,k} & a_{i,l} & 1\\
a_{j,k} & a_{j,l} & 1\\
1 & 1 & 0
\end{array}
\right)  }_{=a_{j,k}+a_{i,l}-a_{i,k}-a_{j,l}}\underbrace{\det\left(
\begin{array}
[c]{ccc}%
b_{i,k} & b_{i,l} & 1\\
b_{j,k} & b_{j,l} & 1\\
1 & 1 & 0
\end{array}
\right)  }_{=b_{j,k}+b_{i,l}-b_{i,k}-b_{j,l}}\\
&  =\left(  a_{j,k}+a_{i,l}-a_{i,k}-a_{j,l}\right)  \left(  b_{j,k}%
+b_{i,l}-b_{i,k}-b_{j,l}\right)  .
\end{align*}
Hence, \cite[Theorem 10]{GriOlv18} can be obtained from Theorem
\ref{thm.bisyl.AB0} by setting $k=2$ and $A=C_{S}$ and $B=C_{T}$ (and
observing that the matrix $W$ then equals to $W_{S,T}$).
\end{example}

\section{The proofs}

Our proofs of Theorem \ref{thm.bisyl.B0} and Theorem \ref{thm.bisyl.AB0} will
rely on some basic commutative algebra: the notion of a unique factorization
domain (\textquotedblleft UFD\textquotedblright); the concepts of coprime,
prime and irreducible elements; the localization of a commutative ring at a
multiplicative subset. This all appears in most textbooks on abstract algebra;
for example, \cite[Sections VIII.4 and VIII.10]{Knapp16} is a good
reference\footnote{We call \textquotedblleft multiplicative
subset\textquotedblright\ what Knapp (in \cite[Section VIII.10]{Knapp16})
calls a \textquotedblleft multiplicative system\textquotedblright.}.

The \textit{content} of a polynomial $p$ over a UFD is defined to be the
greatest common divisor of the coefficients of $p$. For example, the
polynomial $4x^{2}+6y^{2}\in\mathbb{Z}\left[  x,y\right]  $ has content
$\gcd\left(  4,6\right)  =2$. (Of course, in a general UFD, the greatest
common divisor is defined only up to multiplication by a unit.) The following
known facts are crucial to us:

\begin{proposition}
\label{prop.poly.Zufd}A polynomial ring over $\mathbb{Z}$ in finitely many
indeterminates is always a UFD.
\end{proposition}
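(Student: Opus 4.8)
The plan is to reduce everything to the one-variable case and then invoke the classical theorem of Gauss. First I would recall that $\mathbb{Z}$ is itself a UFD: this is precisely the fundamental theorem of arithmetic, which asserts that every nonzero integer factors into primes uniquely up to order and sign.

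The crucial input is \emph{Gauss's theorem}: whenever $R$ is a UFD, the univariate polynomial ring $R\left[  x\right]  $ is again a UFD. Granting this, the proposition follows by a straightforward induction on the number $m$ of indeterminates. Writing $\mathbb{Z}\left[  x_{1},\ldots,x_{m}\right]  =\left(  \mathbb{Z}\left[  x_{1},\ldots,x_{m-1}\right]  \right)  \left[  x_{m}\right]  $, the induction hypothesis makes the coefficient ring $\mathbb{Z}\left[  x_{1},\ldots,x_{m-1}\right]  $ a UFD, and Gauss's theorem promotes this to $\mathbb{Z}\left[  x_{1},\ldots,x_{m}\right]  $; the base case $m=0$ is the fact just recalled that $\mathbb{Z}$ is a UFD.

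Thus the entire content sits in Gauss's theorem, which I expect to be the one genuine obstacle if a self-contained argument is wanted. The standard route passes through the \emph{content} of a polynomial and \emph{Gauss's lemma}, the statement that the product of two primitive polynomials over $R$ is again primitive (equivalently, that content is multiplicative up to units). One then transfers factorizations between $R\left[  x\right]  $ and $\operatorname{Frac}\left(  R\right)  \left[  x\right]  $: since $\operatorname{Frac}\left(  R\right)  $ is a field, the ring $\operatorname{Frac}\left(  R\right)  \left[  x\right]  $ is a PID and hence a UFD, and each irreducible of $R\left[  x\right]  $ turns out to be either a prime of $R$ or a primitive polynomial irreducible over $\operatorname{Frac}\left(  R\right)  $. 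Existence of factorizations in $R\left[  x\right]  $ then comes from splitting off the content, while uniqueness follows from the primeness of these irreducibles together with Gauss's lemma.

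In practice, since the proposition is an entirely standard textbook fact, I would simply cite it from the reference already given rather than reproduce this argument; the induction-plus-Gauss skeleton above is exactly what such a citation abbreviates.
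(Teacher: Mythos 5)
Your proposal is correct: the induction on the number of indeterminates via Gauss's theorem (that $R$ a UFD implies $R[x]$ a UFD) is exactly the standard argument, and the paper itself simply cites this fact from Knapp (with pointers to constructive proofs), just as you suggest doing in your final paragraph.
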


\begin{proof}
[Proof of Proposition \ref{prop.poly.Zufd}.]Proposition \ref{prop.poly.Zufd}
appears, e.g., in \cite[Remark after Corollary 8.21]{Knapp16}. For a
constructive proof of Proposition \ref{prop.poly.Zufd}, we refer to
\cite[Chapter IV, Theorems 4.8 and 4.9]{MiRiRu87} or to \cite[Essay 1.4,
Corollary of Theorem 1 and Corollary 1 of Theorem 2]{Edward22}.
\end{proof}

\begin{proposition}
\label{prop.UFD.irred=prime}Let $p$ be an irreducible element of a UFD
$\mathbb{K}$. Then, the quotient ring $\mathbb{K}/\left(  p\right)  $ is an
integral domain.
\end{proposition}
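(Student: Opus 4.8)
The plan is to recognize that the assertion is just a repackaging of the standard fact that every irreducible element of a UFD is prime. Indeed, for any commutative ring $\mathbb{K}$ and any element $p\in\mathbb{K}$, the quotient $\mathbb{K}/\left(  p\right)  $ is an integral domain if and only if (i) $\mathbb{K}/\left(  p\right)  \neq0$, i.e.\ $\left(  p\right)  \neq\mathbb{K}$, and (ii) $\left(  p\right)  $ is a prime ideal. Condition (i) holds because $p$, being irreducible, is by definition a non-unit, so $\left(  p\right)  \neq\mathbb{K}$. Condition (ii), unwound through the canonical projection, says precisely that whenever $p\mid ab$ in $\mathbb{K}$ we must have $p\mid a$ or $p\mid b$; that is, $p$ is a prime element. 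So the whole proposition reduces to the implication \emph{irreducible} $\Rightarrow$ \emph{prime} in a UFD.

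To prove that implication I would argue as follows. Suppose $p\mid ab$, say $ab=pc$ for some $c\in\mathbb{K}$. First I would clear away the degenerate cases: if $a=0$ or $b=0$ the conclusion is immediate, and if $a$ or $b$ is a unit the conclusion is also immediate (e.g.\ if $b$ is a unit then $p\mid ab$ forces $p\mid a$). In the remaining case I would use the UFD property to write $a$, $b$, and $c$ as (unit times) products of irreducibles. Substituting these into $ab=pc$ produces two factorizations of the same element into irreducibles, one of which contains $p$ as a factor. By the uniqueness clause of the UFD definition, $p$ must be an associate of one of the irreducible factors appearing on the left-hand side, and that factor belongs either to the factorization of $a$ or to that of $b$. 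In the former case $p\mid a$, in the latter $p\mid b$, which is what we wanted.

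The only real obstacle is the bookkeeping in the uniqueness step, namely tracking the leading units and the ``up to associates'' ambiguity so that the matching irreducible factor is correctly attributed to $a$ or to $b$; but this is entirely routine. Since the statement is thoroughly standard, in practice I would keep the write-up short and simply cite a textbook treatment of unique factorization domains (for instance the UFD material in \cite{Knapp16}), where \textquotedblleft irreducible $=$ prime in a UFD\textquotedblright\ is one of the basic characterizations, and then read off the domain property of $\mathbb{K}/\left(  p\right)  $ from the prime-ideal reformulation above.
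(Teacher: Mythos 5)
Your proposal is correct and follows essentially the same route as the paper: reduce to the standard fact that irreducibles are prime in a UFD, then pass from the prime element $p$ to the prime ideal $\left(p\right)$ and read off that $\mathbb{K}/\left(p\right)$ is an integral domain. The paper simply cites \cite{Knapp16} for both steps, whereas you additionally sketch the uniqueness-of-factorization argument for \textquotedblleft irreducible $\Rightarrow$ prime\textquotedblright, which is fine but not a different approach.
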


\begin{proof}
[Proof of Proposition \ref{prop.UFD.irred=prime}.]First of all, we recall that
any irreducible element of a UFD is prime (indeed, this follows from
\cite[Proposition 8.13]{Knapp16}). Thus, the element $p$ of $\mathbb{K}$ is
prime. Hence, \cite[Proposition 8.14]{Knapp16} shows that the ideal $\left(
p\right)  $ of $\mathbb{K}$ is prime. Therefore, the quotient ring
$\mathbb{K}/\left(  p\right)  $ is an integral domain. This proves Proposition
\ref{prop.UFD.irred=prime}.
\end{proof}

We shall furthermore use the following properties of contents (whose proofs
are easy):

\begin{proposition}
\label{prop.content.irred}Let $\mathbb{U}$ be a UFD. Let $\mathbb{F}$ be the
field of fractions of $\mathbb{U}$. Let $p\in\mathbb{U}\left[  x_{1}%
,x_{2},\ldots,x_{m}\right]  $ be a polynomial over $\mathbb{U}$. Assume that
the content of $p$ is $1$. Also assume that $p$ is irreducible when considered
as a polynomial in $\mathbb{F}\left[  x_{1},x_{2},\ldots,x_{m}\right]  $.
Then, $p$ is also irreducible when considered as a polynomial in
$\mathbb{U}\left[  x_{1},x_{2},\ldots,x_{m}\right]  $.
\end{proposition}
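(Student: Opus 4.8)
The plan is to run the standard Gauss-lemma argument, using the hypothesis that the content of $p$ is $1$ to transfer irreducibility from $\mathbb{F}\left[x_{1},x_{2},\ldots,x_{m}\right]$ down to $\mathbb{U}\left[x_{1},x_{2},\ldots,x_{m}\right]$. First I would record two preliminary observations. Since $p$ is irreducible in $\mathbb{F}\left[x_{1},x_{2},\ldots,x_{m}\right]$, it is in particular a nonzero non-unit there; as the units of $\mathbb{F}\left[x_{1},x_{2},\ldots,x_{m}\right]$ are exactly the nonzero elements of $\mathbb{F}$, this forces $p$ to have positive total degree. Consequently $p$ is neither $0$ nor a unit in $\mathbb{U}\left[x_{1},x_{2},\ldots,x_{m}\right]$ (whose units are precisely the units of $\mathbb{U}$), so $p$ at least qualifies as a candidate for irreducibility in the latter ring.

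Next I would analyze an arbitrary factorization $p=qr$ with $q,r\in\mathbb{U}\left[x_{1},x_{2},\ldots,x_{m}\right]$, with the goal of showing that one of the factors is a unit. Viewing this same equation inside $\mathbb{F}\left[x_{1},x_{2},\ldots,x_{m}\right]$ and invoking the assumed irreducibility of $p$ there, one of the two factors---say $r$---must be a unit of $\mathbb{F}\left[x_{1},x_{2},\ldots,x_{m}\right]$, i.e., a nonzero element $c$ of $\mathbb{F}$. But $r$ lies in $\mathbb{U}\left[x_{1},x_{2},\ldots,x_{m}\right]$ and is constant, so in fact $c=r\in\mathbb{U}$. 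Thus $p=cq$, and comparing coefficients shows that $c$ divides every coefficient of $p$ in $\mathbb{U}$; hence $c$ divides the content of $p$, which is $1$. Therefore $c$ is a unit of $\mathbb{U}$, so $r=c$ is a unit of $\mathbb{U}\left[x_{1},x_{2},\ldots,x_{m}\right]$. This shows that every factorization of $p$ in $\mathbb{U}\left[x_{1},x_{2},\ldots,x_{m}\right]$ is trivial, which (together with the first paragraph) is exactly the assertion that $p$ is irreducible there.

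The argument is essentially routine; the only point requiring care is the bookkeeping of where the various elements live---in particular, the observation that a factor which is a unit in $\mathbb{F}\left[x_{1},x_{2},\ldots,x_{m}\right]$ but happens to lie in $\mathbb{U}\left[x_{1},x_{2},\ldots,x_{m}\right]$ is automatically a constant belonging to $\mathbb{U}$. The content hypothesis is precisely what is needed to upgrade this constant from a unit of $\mathbb{F}$ to a unit of $\mathbb{U}$; without it the conclusion genuinely fails, as the example $p=2x$ over $\mathbb{U}=\mathbb{Z}$ (irreducible in $\mathbb{Q}\left[x\right]$ but not in $\mathbb{Z}\left[x\right]$) illustrates.
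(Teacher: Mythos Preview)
Your argument is correct and is exactly the routine Gauss-lemma reasoning one would expect. The paper itself does not prove this proposition at all---it merely introduces it as a known fact ``whose proofs are easy''---so there is nothing to compare against; your write-up is a clean and complete substitute for the omitted proof.
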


\begin{proposition}
\label{prop.content.coprime}Let $\mathbb{U}$ be a UFD. Let $p,q\in
\mathbb{U}\left[  x_{1},x_{2},\ldots,x_{m}\right]  $ be two polynomials over
$\mathbb{U}$. Assume that both $p$ and $q$ have content $1$, and assume
furthermore that $p$ and $q$ don't have any indeterminates in common (i.e.,
there is no $i\in\left[  m\right]  $ such that $\deg_{x_{i}}p>0$ and
$\deg_{x_{i}}q>0$). Then, $p$ and $q$ are coprime.
\end{proposition}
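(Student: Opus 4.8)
The plan is to prove coprimality in its most elementary form: I will show that \emph{every} common divisor of $p$ and $q$ in $\mathbb{U}[x_1, x_2, \ldots, x_m]$ is a unit, which is exactly what it means for $p$ and $q$ to be coprime. So suppose $d$ is a common divisor, and write $p = de$ and $q = df$ with $e, f \in \mathbb{U}[x_1, x_2, \ldots, x_m]$. Note first that $p \neq 0$ and $q \neq 0$ (a polynomial of content $1$ cannot vanish), so $d$, $e$ and $f$ are all nonzero.

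The first and main step is to pin down which indeterminates can occur in $d$. Since $\mathbb{U}$ is a UFD, it is an integral domain, and hence so is $\mathbb{U}[x_1, x_2, \ldots, x_m]$. Therefore, for each fixed $i \in [m]$, the single-variable degree $\deg_{x_i}$ is additive under multiplication, so that $p = de$ yields $\deg_{x_i} p = \deg_{x_i} d + \deg_{x_i} e$. In particular, if $\deg_{x_i} p = 0$, then $\deg_{x_i} d = 0$. Thus $d$ has positive degree only in those indeterminates in which $p$ does; running the same argument on $q = df$, the polynomial $d$ has positive degree only in those indeterminates in which $q$ does.

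Now I combine this with the disjointness hypothesis: there is no $i$ with $\deg_{x_i} p > 0$ and $\deg_{x_i} q > 0$, so no indeterminate occurs in both $p$ and $q$. Since any indeterminate occurring in $d$ would have to occur in both, $d$ involves no indeterminate at all, i.e. $d \in \mathbb{U}$ is a constant. Finally, because $d \in \mathbb{U}$ divides $p = de$, it divides every coefficient of $p$, hence divides the content of $p$, which is $1$ (up to a unit); therefore $d$ is a unit. This proves that $p$ and $q$ are coprime.

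I do not anticipate a serious obstacle, since the statement is essentially bookkeeping. The one point deserving care is the additivity of $\deg_{x_i}$ under multiplication, which is exactly where the integral-domain property of $\mathbb{U}$ (guaranteed because $\mathbb{U}$ is a UFD) is used: without it a product could drop degree, and the containment of the indeterminates of $d$ in those of $p$ (and of $q$) would fail.
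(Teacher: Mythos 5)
Your proof is correct. The paper states Proposition \ref{prop.content.coprime} without proof (describing it as easy), and your argument --- additivity of $\deg_{x_i}$ over an integral domain forces any common divisor to involve only indeterminates shared by $p$ and $q$, hence to be a constant, which then divides the content $1$ --- is exactly the standard argument intended.
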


The next simple fact states that for any positive integer $n$, the determinant
of the \textquotedblleft generic $n\times n$-matrix\textquotedblright\ (i.e.,
of the $n\times n$-matrix whose $n^{2}$ entries are distinct indeterminates in
a polynomial ring over $\mathbb{Z}$) is irreducible as a polynomial over
$\mathbb{Z}$:

\begin{corollary}
\label{cor.det.irred}Let $n$ be a positive integer. Let $\mathbb{G}$ be the
polynomial ring $\mathbb{Z}\left[  a_{i,j}\ \mid\ \left(  i,j\right)
\in\left[  n\right]  ^{2}\right]  $. Let $\overline{A}\in\mathbb{G}^{n\times
n}$ be the $n\times n$-matrix $\left(  a_{i,j}\right)  _{1\leq i\leq n,\ 1\leq
j\leq n}$. Then, the element $\det\overline{A}$ of $\mathbb{G}$ is irreducible.
\end{corollary}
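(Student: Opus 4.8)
The plan is to induct on $n$. The base case $n=1$ is immediate, since then $\det\overline{A}=a_{1,1}$ is a single indeterminate, hence irreducible in $\mathbb{G}=\mathbb{Z}\left[a_{1,1}\right]$. For the inductive step, fix $n\geq2$ and assume the claim for $n-1$. I single out the variable $a_{1,1}$: let
\[
\mathbb{U}=\mathbb{Z}\left[a_{i,j}\ \mid\ \left(i,j\right)\in\left[n\right]^{2}\setminus\left\{\left(1,1\right)\right\}\right],
\]
so that $\mathbb{G}=\mathbb{U}\left[a_{1,1}\right]$. Expanding the determinant along its first row gives $\det\overline{A}=a_{1,1}M+Q$, where $M$ is the $\left(1,1\right)$-minor of $\overline{A}$ --- that is, the determinant of the generic $\left(n-1\right)\times\left(n-1\right)$-matrix $\left(a_{i,j}\right)_{2\leq i,j\leq n}$ --- and $Q\in\mathbb{U}$ (explicitly, $Q=\det\overline{A}$ with $a_{1,1}$ set to $0$) does not involve $a_{1,1}$. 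By Proposition \ref{prop.poly.Zufd}, $\mathbb{U}$ is a UFD; let $\mathbb{F}$ be its field of fractions. Regarded as a polynomial in the single variable $a_{1,1}$ over $\mathbb{U}$, our $\det\overline{A}=Ma_{1,1}+Q$ has degree $1$ (as $M\neq0$), and a degree-$1$ polynomial over the field $\mathbb{F}$ is automatically irreducible. Thus, by Proposition \ref{prop.content.irred}, it suffices to show that the content of $\det\overline{A}$ as such a polynomial is $1$; that is, that $\gcd\left(M,Q\right)=1$ in $\mathbb{U}$.

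To establish $\gcd\left(M,Q\right)=1$, I first argue that $M$ is irreducible in $\mathbb{U}$. By the induction hypothesis, $M$ is irreducible in the smaller UFD $R=\mathbb{Z}\left[a_{i,j}\ \mid\ 2\leq i,j\leq n\right]$. Proposition \ref{prop.UFD.irred=prime} then shows that $R/\left(M\right)$ is an integral domain. Since $\mathbb{U}$ is a polynomial ring over $R$ in the remaining indeterminates (namely the $a_{1,j}$ and $a_{i,1}$ for $2\leq i,j\leq n$), we obtain $\mathbb{U}/\left(M\right)\cong\left(R/\left(M\right)\right)\left[a_{1,j},a_{i,1}\ \mid\ 2\leq i,j\leq n\right]$, which is again an integral domain. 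Hence $M$ is prime --- in particular irreducible --- in $\mathbb{U}$. Consequently, any common divisor of $M$ and $Q$ is either a unit or an associate of $M$, so $\gcd\left(M,Q\right)=1$ will follow as soon as I show that $M\nmid Q$.

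The crux --- and the step I expect to be the main obstacle --- is thus to verify $M\nmid Q$. Here I would argue by specialization: if $M\mid Q$ held, say $Q=Mh$ in $\mathbb{U}$, then under any evaluation $\varphi\colon\mathbb{U}\to\mathbb{Z}$ of the indeterminates we would have $\varphi\left(Q\right)=\varphi\left(M\right)\varphi\left(h\right)$, so $\varphi\left(M\right)=0$ would force $\varphi\left(Q\right)=0$. It therefore suffices to exhibit a single integer evaluation at which $M$ vanishes but $Q$ does not. I would take the $\left(n-1\right)\times\left(n-1\right)$ block $\left(a_{i,j}\right)_{2\leq i,j\leq n}$ to be $\operatorname{diag}\left(1,1,\ldots,1,0\right)$ (so that $M\mapsto0$), and set the border entries so that $\overline{A}$ with $a_{1,1}=0$ becomes a permutation matrix --- concretely, $a_{1,n}=a_{n,1}=1$ with all other first-row and first-column entries $0$, which makes the resulting matrix the one realizing the cycle swapping $1$ and $n$. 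Then $Q$ evaluates to $\pm1\neq0$, contradicting $M\mid Q$; hence $M\nmid Q$, and the induction closes. Everything outside this coprimality claim is routine UFD and content bookkeeping; the essential content lies precisely in producing an evaluation where the minor $M$ degenerates while the off-diagonal determinant $Q$ survives.
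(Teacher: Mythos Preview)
Your argument is correct, and it takes a genuinely different route from the paper. The paper's proof is essentially a two-line citation: it invokes a known result (\cite[Lemma 5.12]{DeKuRo78}) that $\det\overline{A}$ is irreducible over $\mathbb{Q}$, and then applies Proposition~\ref{prop.content.irred} (Gauss's lemma, with $\mathbb{U}=\mathbb{Z}$) together with the observation that all monomial coefficients of $\det\overline{A}$ are $\pm1$, hence content $1$. By contrast, you give a self-contained induction on $n$: you isolate $a_{1,1}$, write $\det\overline{A}=Ma_{1,1}+Q$ as a linear polynomial over the UFD $\mathbb{U}$ in the remaining variables, use Proposition~\ref{prop.content.irred} with $\mathbb{U}$ in place of $\mathbb{Z}$, and reduce irreducibility to the coprimality $\gcd(M,Q)=1$, which you then verify by propagating the inductive irreducibility of $M$ to $\mathbb{U}$ and exhibiting an explicit integer specialization (the transposition matrix for $(1\ n)$) at which $M$ vanishes but $Q$ does not. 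Your approach buys independence from the external reference and makes the argument fully elementary; the paper's approach buys brevity. Both uses of Proposition~\ref{prop.content.irred} are legitimate, just at different levels of the tower $\mathbb{Z}\subset\mathbb{U}\subset\mathbb{G}$.
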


\begin{proof}
[Proof of Corollary \ref{cor.det.irred}.]A well-known fact (e.g., \cite[Lemma
5.12]{DeKuRo78}) shows that $\det\overline{A}$ is irreducible as an element of
$\mathbb{Q}\left[  a_{i,j}\ \mid\ \left(  i,j\right)  \in\left[  n\right]
^{2}\right]  $. This yields (using Proposition \ref{prop.content.irred}) that
$\det\overline{A}$ is irreducible as an element of $\mathbb{Z}\left[
a_{i,j}\ \mid\ \left(  i,j\right)  \in\left[  n\right]  ^{2}\right]  $ as
well, since the polynomial $\det\overline{A}$ has content $1$. This proves
Corollary \ref{cor.det.irred}.
\end{proof}

An element $a$ of a commutative ring $\mathbb{A}$ is said to be
\textit{regular} if every $b\in\mathbb{A}$ satisfying $ab=0$ must satisfy
$b=0$. (Regular elements are also known as \textit{non-zero-divisors}.) In a
polynomial ring, each indeterminate is regular; hence, each monomial (without
coefficient) is regular (since any product of two regular elements is
regular). The following fact is easy to see:\footnote{We recall a few standard
concepts from commutative algebra:
\par
Let $\mathbb{K}$ be a commutative ring. A \textit{multiplicative subset} of
$\mathbb{K}$ means a subset $S$ of $\mathbb{K}$ that contains the unity
$1_{\mathbb{K}}$ of $\mathbb{K}$ and has the property that every $a,b\in S$
satisfy $ab\in S$.
\par
If $S$ is a multiplicative subset of $\mathbb{K}$, then the
\textit{localization} of $\mathbb{K}$ at $S$ is defined as follows: Let $\sim$
be the binary relation on the set $\mathbb{K}\times S$ defined by%
\[
\left(  \left(  r,s\right)  \sim\left(  r^{\prime},s^{\prime}\right)  \right)
\ \Longleftrightarrow\ \left(  t\left(  rs^{\prime}-sr^{\prime}\right)
=0\text{ for some }t\in S\right)  .
\]
Then, it is easy to see that $\sim$ is an equivalence relation. The set
$\mathbb{L}$ of its equivalence classes $\left[  \left(  r,s\right)  \right]
$ can be equipped with a ring structure via the rules $\left[  \left(
r,s\right)  \right]  +\left[  \left(  r^{\prime},s^{\prime}\right)  \right]
=\left[  \left(  rs^{\prime}+sr^{\prime},ss^{\prime}\right)  \right]  $ and
$\left[  \left(  r,s\right)  \right]  \cdot\left[  \left(  r^{\prime
},s^{\prime}\right)  \right]  =\left[  \left(  rr^{\prime},ss^{\prime}\right)
\right]  $ (with zero element $\left[  \left(  0,1\right)  \right]  $ and
unity $\left[  \left(  1,1\right)  \right]  $). The resulting ring
$\mathbb{L}$ is commutative, and is known as the localization of $\mathbb{K}$
at $S$. (This generalizes the construction of $\mathbb{Q}$ from $\mathbb{Z}$
known from high school.)
\par
The element $\left[  \left(  r,s\right)  \right]  $ of $\mathbb{L}$ is denoted
by $\dfrac{r}{s}$. There is a canonical ring homomorphism from $\mathbb{K}$ to
$\mathbb{L}$ that sends each $r\in\mathbb{K}$ to $\left[  \left(  r,1\right)
\right]  =\dfrac{r}{1}\in\mathbb{L}$.
\par
When all elements of the multiplicative subset $S$ are regular, the statement
\textquotedblleft$t\left(  rs^{\prime}-sr^{\prime}\right)  =0$ for some $t\in
S$\textquotedblright\ in the definition of the relation $\sim$ can be
rewritten in the equivalent (but much simpler) form \textquotedblleft%
$rs^{\prime}=sr^{\prime}$ \textquotedblright\ (which is even more reminiscent
of the construction of $\mathbb{Q}$).}

\begin{proposition}
\label{prop.locali.div}Let $\mathbb{K}$ be a commutative ring. Let $S$ be a
multiplicative subset of $\mathbb{K}$ such that all elements of $S$ are
regular. Let $\mathbb{L}$ be the localization of the ring $\mathbb{K}$ at $S$. Then:

\textbf{(a)} The canonical ring homomorphism from $\mathbb{K}$ to $\mathbb{L}$
is injective. We shall thus consider it as an embedding.

\textbf{(b)} If $\mathbb{K}$ is an integral domain, then $\mathbb{L}$ is an
integral domain.

\textbf{(c)} Let $a$ and $b$ be two elements of $\mathbb{K}$. Then, we have
the following logical equivalence:%
\[
\left(  a\mid b\text{ in }\mathbb{L}\right)  \ \Longleftrightarrow\ \left(
a\mid sb\text{ in }\mathbb{K}\text{ for some }s\in S\right)  .
\]

\end{proposition}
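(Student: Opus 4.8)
The plan is to reduce everything to the simplified description of the equivalence relation $\sim$ mentioned in the footnote: since every element of $S$ is regular, the defining condition ``$t\left(  rs^{\prime}-sr^{\prime}\right)  =0$ for some $t\in S$'' is equivalent to the plain equation $rs^{\prime}=sr^{\prime}$. Indeed, the ``$\Leftarrow$'' direction follows by taking $t=1_{\mathbb{K}}\in S$, while ``$\Rightarrow$'' follows because a regular $t$ satisfying $t\left(  rs^{\prime}-sr^{\prime}\right)  =0$ forces $rs^{\prime}-sr^{\prime}=0$. With this reformulation in hand, an equality $\dfrac{r}{s}=\dfrac{r^{\prime}}{s^{\prime}}$ in $\mathbb{L}$ becomes simply $rs^{\prime}=sr^{\prime}$ in $\mathbb{K}$, so each equality of fractions can be checked by a single cross-multiplication. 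I would also record, once and for all, that each $\dfrac{s}{1}$ with $s\in S$ is a unit of $\mathbb{L}$, with inverse $\dfrac{1}{s}$ (since $\dfrac{s}{1}\cdot\dfrac{1}{s}=\dfrac{s}{s}=\dfrac{1}{1}$).

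Part \textbf{(a)} then falls out immediately: if $\dfrac{r}{1}=\dfrac{0}{1}$, the cross-multiplication criterion gives $r\cdot1=1\cdot0$, i.e. $r=0$; hence the canonical homomorphism has trivial kernel and is injective. For part \textbf{(b)}, assuming $\mathbb{K}$ is an integral domain, I would first note that $\mathbb{L}\neq0$, because $\dfrac{1}{1}=\dfrac{0}{1}$ would force $1=0$ in $\mathbb{K}$. Then, if a product $\dfrac{r}{s}\cdot\dfrac{r^{\prime}}{s^{\prime}}=\dfrac{rr^{\prime}}{ss^{\prime}}$ equals $\dfrac{0}{1}$, the criterion yields $rr^{\prime}=0$, so $r=0$ or $r^{\prime}=0$ by the domain hypothesis; either way one of the two factors equals $\dfrac{0}{1}$. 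Thus $\mathbb{L}$ has no zero divisors and is an integral domain.

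The substance is in part \textbf{(c)}. For the backward implication, I would assume $a\mid sb$ in $\mathbb{K}$, say $sb=ac$ with $s\in S$ and $c\in\mathbb{K}$; passing to $\mathbb{L}$ and multiplying by the inverse $\dfrac{1}{s}$ of $\dfrac{s}{1}$ gives $\dfrac{b}{1}=\dfrac{a}{1}\cdot\dfrac{c}{s}$, so $a\mid b$ in $\mathbb{L}$. For the forward implication, I would assume $a\mid b$ in $\mathbb{L}$ and write the quotient as $\dfrac{p}{q}$ with $p\in\mathbb{K}$ and $q\in S$; the equality $\dfrac{b}{1}=\dfrac{a}{1}\cdot\dfrac{p}{q}=\dfrac{ap}{q}$ translates, via the cross-multiplication criterion, into $bq=ap$ in $\mathbb{K}$, which says precisely that $a\mid qb$ with $q\in S$.

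No step poses a genuine obstacle; the only point requiring care is to invoke regularity exactly where it is needed --- namely to pass from the defining three-term condition on $\sim$ to the two-term cross-multiplication equality --- since without it both the injectivity in \textbf{(a)} and the forward direction of \textbf{(c)} would fail. I would therefore isolate the simplified form of $\sim$ as a preliminary observation and cite it throughout, rather than re-deriving the cancellation at each use.
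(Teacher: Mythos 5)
Your proof is correct and complete. The paper itself gives no proof of this proposition (it is stated as ``easy to see'', with the footnote merely recalling the construction of the localization and already pointing out the cross-multiplication simplification of $\sim$ when $S$ consists of regular elements); your argument is exactly the standard verification that this remark invites, and you correctly isolate the two places where regularity is genuinely needed --- the injectivity in \textbf{(a)} and the forward direction of \textbf{(c)}.
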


Matrices over arbitrary commutative rings can behave a lot less predictably
than matrices over fields. However, matrices over integral domains still show
a lot of the latter good behavior, such as the following:

\begin{proposition}
\label{prop.intdom.ker-mat}Let $P$ be a finite set. Let $\mathbb{M}$ be an
integral domain. Let $W\in\mathbb{M}^{P\times P}$ be a $P\times P$-matrix over
$\mathbb{M}$. Let $\mathbf{u}\in\mathbb{M}^{P}$ be a vector such that
$\mathbf{u}\neq0$ and $W\mathbf{u}=0$. Here, $\mathbf{u}$ is considered as a
\textquotedblleft column vector\textquotedblright, so that $W\mathbf{u}$ is
defined by
\[
W\mathbf{u}=\left(  \sum_{q\in P}w_{p,q}u_{q}\right)  _{p\in P},\qquad
\text{where }W=\left(  w_{p,q}\right)  _{\left(  p,q\right)  \in P\times
P}\text{ and }\mathbf{u}=\left(  u_{p}\right)  _{p\in P}.
\]

Then, $\det W=0$.
\end{proposition}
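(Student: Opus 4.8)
The statement to prove is Proposition~\ref{prop.intdom.ker-mat}: over an integral domain $\mathbb{M}$, if a $P \times P$-matrix $W$ has a nonzero vector $\mathbf{u}$ in its kernel, then $\det W = 0$.

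\medskip

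The plan is to pass from the integral domain $\mathbb{M}$ to its field of fractions $\mathbb{F}$, where the result is the familiar statement that a matrix with a nontrivial kernel is singular. First I would recall that any integral domain $\mathbb{M}$ embeds into its field of fractions $\mathbb{F}$ (this is exactly the localization at $S = \mathbb{M} \setminus \{0\}$, whose elements are all regular because $\mathbb{M}$ is a domain; Proposition~\ref{prop.locali.div}(a) gives the injectivity of the canonical homomorphism $\mathbb{M} \to \mathbb{F}$). I would then regard $W$ as a matrix over $\mathbb{F}$ and $\mathbf{u}$ as a vector over $\mathbb{F}$. The hypotheses $\mathbf{u} \neq 0$ and $W\mathbf{u} = 0$ are preserved under this embedding: since the embedding is injective, $\mathbf{u}$ stays nonzero, and the relation $W\mathbf{u} = 0$ (an equality of vectors with entries in $\mathbb{M}$) continues to hold in $\mathbb{F}$.

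\medskip

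Over the field $\mathbb{F}$, the conclusion is standard linear algebra. The matrix $W$ has a nonzero kernel vector, so its columns are linearly dependent over $\mathbb{F}$; hence $W$ is not invertible over $\mathbb{F}$, and therefore $\det W = 0$ in $\mathbb{F}$. (If one prefers to avoid indexing subtleties with a general finite index set $P$, one may fix a total order on $P$ and identify $\mathbb{F}^P$ with the usual column space $\mathbb{F}^{\left\vert P\right\vert}$; the determinant is unchanged.) Finally, since the determinant of a matrix is a polynomial in its entries, the element $\det W$ computed in $\mathbb{M}$ maps, under the embedding $\mathbb{M} \hookrightarrow \mathbb{F}$, to the element $\det W$ computed in $\mathbb{F}$, which we have just shown is $0$. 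Because the embedding is injective, this forces $\det W = 0$ already in $\mathbb{M}$, which is the claim.

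\medskip

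I do not expect any serious obstacle here; the proposition is essentially the reduction of a determinantal fact over a domain to the corresponding fact over a field. The only point requiring a little care is ensuring the hypotheses and the determinant transport correctly along the embedding, which is exactly what the injectivity in Proposition~\ref{prop.locali.div}(a) guarantees. One could alternatively give a ring-theoretic argument using the adjugate matrix (from $W\mathbf{u} = 0$ and $\operatorname{adj}(W)\, W = (\det W)\, I$ one gets $(\det W)\, \mathbf{u} = 0$, and regularity of a nonzero entry of $\mathbf{u}$ in the domain $\mathbb{M}$ forces $\det W = 0$), which avoids localization entirely; but the field-of-fractions route is cleanest given the tools already set up in the excerpt.
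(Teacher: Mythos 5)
Your proposal is correct and follows essentially the same route as the paper's own proof: reindex $P$ to pass to an ordinary $\left\vert P\right\vert\times\left\vert P\right\vert$-matrix, embed $\mathbb{M}$ into its field of fractions, and invoke the classical fact that a matrix with nontrivial kernel over a field has determinant $0$. The adjugate-based alternative you sketch at the end is also valid but is not what the paper does.
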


\begin{proof}
[Proof of Proposition \ref{prop.intdom.ker-mat}.]Let $m=\left\vert
P\right\vert $. Then, we can view the $P\times P$-matrix $W$ as an $m\times
m$-matrix (by \textquotedblleft numerical reindexing\textquotedblright, as
explained in \cite[\S 1]{Grinbe18}), and we can view the vector $\mathbf{u}$
as a column vector of size $m$. Let us do this from here on.

Let $\mathbb{F}$ be the quotient field of the integral domain $\mathbb{M}$.
Thus, there is a canonical embedding of $\mathbb{M}$ into $\mathbb{F}$. Hence,
we can view the matrix $W\in\mathbb{M}^{m\times m}$ as a matrix over
$\mathbb{F}$, and we can view the vector $\mathbf{u}\in\mathbb{M}^{m}$ as a
vector over $\mathbb{F}$. Let us do so from here on. We are now in the realm
of classical linear algebra over fields: The vector $\mathbf{u}\in
\mathbb{F}^{m}$ is nonzero (since $\mathbf{u}\neq0$) and belongs to the kernel
of the $m\times m$-matrix $W\in\mathbb{F}^{m\times m}$ (since $W\mathbf{u}%
=0$). Hence, the kernel of the matrix $W$ is nontrivial. In other words, this
matrix $W$ is singular. Thus, $\det W=0$ by a classical fact of linear
algebra. This proves Proposition \ref{prop.intdom.ker-mat}.
\end{proof}

Let us next recall an identity for determinants (a version of the
Cauchy--Binet formula):

\begin{lemma}
\label{lem.cauchy-binet}Let $n\in\mathbb{N}$, $m\in\mathbb{N}$ and
$p\in\mathbb{N}$. Let $A\in\mathbb{K}^{n\times p}$ be an $n\times p$-matrix.
Let $B\in\mathbb{K}^{p\times m}$ be a $p\times m$-matrix. Let $k\in\mathbb{N}%
$. Let $P$ be a subset of $\left[  n\right]  $ such that $\left\vert
P\right\vert =k$. Let $Q$ be a subset of $\left[  m\right]  $ such that
$\left\vert Q\right\vert =k$. Then,%
\[
\det\left(  \operatorname*{sub}\nolimits_{P}^{Q}\left(  AB\right)  \right)
=\sum_{\substack{R\subseteq\left[  p\right]  ;\\\left\vert R\right\vert
=k}}\det\left(  \operatorname*{sub}\nolimits_{P}^{R}A\right)  \cdot\det\left(
\operatorname*{sub}\nolimits_{R}^{Q}B\right)  .
\]

\end{lemma}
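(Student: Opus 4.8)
The plan is to reduce the statement to the classical Cauchy--Binet formula (the case where the row set $P$ and the column set $Q$ are the full initial segments $\left[  k\right]  $) and then to establish that case by a direct Leibniz expansion. The reduction is essentially a tautology: set $A^{\prime}=\operatorname*{sub}\nolimits_{P}^{\left[  p\right]  }A\in\mathbb{K}^{k\times p}$ and $B^{\prime}=\operatorname*{sub}\nolimits_{\left[  p\right]  }^{Q}B\in\mathbb{K}^{p\times k}$. Directly from the definition of matrix multiplication, the $\left(  i,j\right)  $-entry of either $\operatorname*{sub}\nolimits_{P}^{Q}\left(  AB\right)  $ or $A^{\prime}B^{\prime}$ (for $i\in P$ and $j\in Q$) equals $\sum_{l\in\left[  p\right]  }a_{i,l}b_{l,j}$, so $\operatorname*{sub}\nolimits_{P}^{Q}\left(  AB\right)  =A^{\prime}B^{\prime}$. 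Moreover, for every $R\subseteq\left[  p\right]  $ with $\left\vert R\right\vert =k$ we have $\operatorname*{sub}\nolimits_{\left[  k\right]  }^{R}A^{\prime}=\operatorname*{sub}\nolimits_{P}^{R}A$ and $\operatorname*{sub}\nolimits_{R}^{\left[  k\right]  }B^{\prime}=\operatorname*{sub}\nolimits_{R}^{Q}B$. Hence it suffices to prove $\det\left(  A^{\prime}B^{\prime}\right)  =\sum_{R}\det\left(  \operatorname*{sub}\nolimits_{\left[  k\right]  }^{R}A^{\prime}\right)  \det\left(  \operatorname*{sub}\nolimits_{R}^{\left[  k\right]  }B^{\prime}\right)  $ for a $k\times p$-matrix $A^{\prime}$ and a $p\times k$-matrix $B^{\prime}$.

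For the latter, I would expand via the Leibniz formula. Denote the entries of $A^{\prime}$ by $a_{x,l}^{\prime}$ and those of $B^{\prime}$ by $b_{l,y}^{\prime}$, and let $S_{k}$ be the symmetric group on $\left[  k\right]  $. Substituting $\left(  A^{\prime}B^{\prime}\right)  _{x,\sigma\left(  x\right)  }=\sum_{l=1}^{p}a_{x,l}^{\prime}b_{l,\sigma\left(  x\right)  }^{\prime}$ into $\det\left(  A^{\prime}B^{\prime}\right)  =\sum_{\sigma\in S_{k}}\operatorname{sgn}\left(  \sigma\right)  \prod_{x=1}^{k}\left(  A^{\prime}B^{\prime}\right)  _{x,\sigma\left(  x\right)  }$ and multiplying out the product over $x$ produces a sum indexed by all functions $f\colon\left[  k\right]  \to\left[  p\right]  $:
\[
\det\left(  A^{\prime}B^{\prime}\right)  =\sum_{f\colon\left[  k\right]  \to\left[  p\right]  }\left(  \prod_{x=1}^{k}a_{x,f\left(  x\right)  }^{\prime}\right)  \sum_{\sigma\in S_{k}}\operatorname{sgn}\left(  \sigma\right)  \prod_{x=1}^{k}b_{f\left(  x\right)  ,\sigma\left(  x\right)  }^{\prime}.
\]
For fixed $f$, the inner sum over $\sigma$ is the determinant of the $k\times k$-matrix $\left(  b_{f\left(  x\right)  ,y}^{\prime}\right)  _{1\leq x,y\leq k}$, i.e.\ the matrix obtained by stacking rows $f\left(  1\right)  ,\ldots,f\left(  k\right)  $ of $B^{\prime}$ in that order.

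The key step, where all the real content sits, is the analysis of this inner determinant. If $f$ is not injective, then two of its rows coincide and the determinant vanishes, so only injective $f$ contribute. Each injective $f$ has a $k$-element image $R=f\left(  \left[  k\right]  \right)  \subseteq\left[  p\right]  $; writing $R=\left\{  r_{1}<\cdots<r_{k}\right\}  $, there is a unique $\tau\in S_{k}$ with $f\left(  x\right)  =r_{\tau\left(  x\right)  }$, and the inner determinant equals $\operatorname{sgn}\left(  \tau\right)  \det\left(  \operatorname*{sub}\nolimits_{R}^{\left[  k\right]  }B^{\prime}\right)  $. Summing the accompanying factor $\operatorname{sgn}\left(  \tau\right)  \prod_{x}a_{x,f\left(  x\right)  }^{\prime}$ over all injective $f$ with image $R$ (equivalently, over all $\tau\in S_{k}$) reconstitutes exactly $\det\left(  \operatorname*{sub}\nolimits_{\left[  k\right]  }^{R}A^{\prime}\right)  $, so the contribution of each image $R$ is $\det\left(  \operatorname*{sub}\nolimits_{\left[  k\right]  }^{R}A^{\prime}\right)  \det\left(  \operatorname*{sub}\nolimits_{R}^{\left[  k\right]  }B^{\prime}\right)  $; summing over $R$ finishes the proof. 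The main obstacle is precisely this sign bookkeeping: one must check that the signs coming from $\sigma$, from factoring $f$ through its ordered image via $\tau$, and from assembling the $a^{\prime}$-product into a determinant all combine so that no spurious sign survives. An alternative, citation-based route would simply invoke a standard reference for Cauchy--Binet once the identity $\operatorname*{sub}\nolimits_{P}^{Q}\left(  AB\right)  =A^{\prime}B^{\prime}$ of the first paragraph is in place.
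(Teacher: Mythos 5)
Your proof is correct, and it differs from the paper in that the paper does not prove Lemma \ref{lem.cauchy-binet} at all: it simply cites \cite[Corollary 7.251]{detnotes} (and \cite[Chapter I, (19)]{Gantma00}) for it. You instead give the standard self-contained argument, and both of your steps check out. The reduction $\operatorname*{sub}\nolimits_{P}^{Q}\left(AB\right)=\left(\operatorname*{sub}\nolimits_{P}^{\left[p\right]}A\right)\left(\operatorname*{sub}\nolimits_{\left[p\right]}^{Q}B\right)$ is indeed immediate from the definition of matrix multiplication, and the sign bookkeeping you single out as the main risk does work: for an injective $f$ with image $R=\left\{r_{1}<\cdots<r_{k}\right\}$ and $f\left(x\right)=r_{\tau\left(x\right)}$, the matrix $\left(b_{f\left(x\right),y}^{\prime}\right)_{x,y}$ is a row permutation of $\operatorname*{sub}\nolimits_{R}^{\left[k\right]}B^{\prime}$, contributing $\operatorname{sgn}\left(\tau\right)$, and the sum $\sum_{\tau\in S_{k}}\operatorname{sgn}\left(\tau\right)\prod_{x}a_{x,r_{\tau\left(x\right)}}^{\prime}$ is exactly the Leibniz expansion of $\det\left(\operatorname*{sub}\nolimits_{\left[k\right]}^{R}A^{\prime}\right)$, so no spurious sign survives. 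What your route buys is a proof that stands on its own (and your closing remark correctly identifies the citation-based shortcut the paper actually takes); what the paper's route buys is brevity, since the lemma is classical. Either is acceptable here.
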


\begin{proof}
[\nopunct]\noindent Lemma \ref{lem.cauchy-binet} is \cite[Corollary
7.251]{detnotes} (except that we are using the notation $\operatorname*{sub}%
\nolimits_{I}^{J}C$ for what is called $\operatorname*{sub}\nolimits_{w\left(
I\right)  }^{w\left(  J\right)  }C$ in \cite{detnotes}). It also appears in
\cite[Chapter I, (19)]{Gantma00} (where it is stated using $p$-tuples instead
of subsets).
\end{proof}

The next lemma is just a particular case of Theorem \ref{thm.bisyl.B0}, but it
is a helpful stepping stone on the way to proving the latter theorem:

\begin{lemma}
\label{lem.bisyl.AdB0}Let $A=\left(  a_{i,j}\right)  _{1\leq i\leq n+1,\ 1\leq
j\leq n+1}\in\mathbb{K}^{\left(  n+1\right)  \times\left(  n+1\right)  }$ and
$B=\left(  b_{i,j}\right)  _{1\leq i\leq n+1,\ 1\leq j\leq n+1}\in
\mathbb{K}^{\left(  n+1\right)  \times\left(  n+1\right)  }$ be such that
$b_{n+1,n+1}=0$. Assume further that%
\begin{equation}
a_{n+1,j}=0\ \ \ \ \ \ \ \ \ \ \text{for all }j\in\left[  n\right]  .
\label{eq.lem.bisyl.AdB0.ass-j}%
\end{equation}
Define the $P_{k}\times P_{k}$-matrix $W$ as in Theorem \ref{thm.bisyl.B0}.
Then, $\det A\mid\det W$.
\end{lemma}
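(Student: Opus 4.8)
The key structural observation is that the assumption (\ref{eq.lem.bisyl.AdB0.ass-j}) — namely $a_{n+1,j}=0$ for all $j\in[n]$ — forces every submatrix $\operatorname*{sub}\nolimits_{I+}^{J+}A$ to have a very special last row. Indeed, the bottom row of $\operatorname*{sub}\nolimits_{I+}^{J+}A$ consists of the entries $a_{n+1,v}$ for $v\in J+=J\cup\{n+1\}$; by (\ref{eq.lem.bisyl.AdB0.ass-j}) all of these vanish except possibly the last one, $a_{n+1,n+1}$. Expanding $\det\left(\operatorname*{sub}\nolimits_{I+}^{J+}A\right)$ along this last row therefore collapses to a single term, giving the clean factorization

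<br>

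\[
\det\left(\operatorname*{sub}\nolimits_{I+}^{J+}A\right)=\pm\,a_{n+1,n+1}\cdot\det\left(\operatorname*{sub}\nolimits_{I}^{J}A\right),
\]

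<br>

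where $\operatorname*{sub}\nolimits_{I}^{J}A$ is the $k\times k$ minor of $A$ using rows $I$ and columns $J$ from within $[n]\times[n]$, and the sign is determined by the position of $n+1$. The plan is to exploit this to pull the scalar $a_{n+1,n+1}$ and the signs entirely out of the $A$-factor of each entry of $W$.

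**The main reduction.** Substituting the above into the definition of $W$, the $(I,J)$-entry of $W$ becomes $\pm\,a_{n+1,n+1}\cdot\det\left(\operatorname*{sub}\nolimits_{I}^{J}A\right)\cdot\det\left(\operatorname*{sub}\nolimits_{I+}^{J+}B\right)$. I would factor $W=D_{\mathrm{row}}\cdot W'\cdot D_{\mathrm{col}}$, where $D_{\mathrm{row}}$ and $D_{\mathrm{col}}$ are diagonal $P_k\times P_k$-matrices absorbing the row- and column-dependent signs (the sign $\pm$ splits multiplicatively as a product of a factor depending only on $I$ and one depending only on $J$, since it comes from the row/column insertion position of $n+1$), and $W'$ is the $P_k\times P_k$-matrix with $(I,J)$-entry $a_{n+1,n+1}\cdot\det\left(\operatorname*{sub}\nolimits_{I}^{J}A\right)\cdot\det\left(\operatorname*{sub}\nolimits_{I+}^{J+}B\right)$. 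Since the diagonal matrices have determinant $\pm1$, we get $\det W=\pm\det W'$. Now $W'$ further factors: its $(I,J)$-entry is $\det\left(\operatorname*{sub}\nolimits_{I}^{J}A\right)$ (a $k\times k$ minor of the top-left $n\times n$ block of $A$) times $a_{n+1,n+1}\det\left(\operatorname*{sub}\nolimits_{I+}^{J+}B\right)$.

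**Bringing in Cauchy–Binet and compound determinants.** The matrix $\left(\det\left(\operatorname*{sub}\nolimits_{I}^{J}A\right)\right)_{I,J\in P_k}$ is precisely the $k$-th compound matrix of the top-left $n\times n$ submatrix $A':=\operatorname*{sub}\nolimits_{[n]}^{[n]}A$. By the classical compound-determinant formula (a consequence of Lemma \ref{lem.cauchy-binet} applied with $P=Q$ ranging over all of $P_k$), its determinant equals $\left(\det A'\right)^{\binom{n-1}{k-1}}$. Crucially, under the hypothesis (\ref{eq.lem.bisyl.AdB0.ass-j}), expanding $\det A$ along its last row shows $\det A=\pm\,a_{n+1,n+1}\cdot\det A'$, so $\det A'$ and $a_{n+1,n+1}$ are exactly the two factors of $\det A$. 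I would then argue that these two factors together divide $\det W'$: the determinant of the Hadamard-style product $W'$ need not factor as a product of the two compound determinants, so I cannot simply multiply divisibilities. Instead, the clean route is to treat $W'$ as a product using Cauchy–Binet indirectly, or — more robustly — to pass to the generic case and use the UFD/irreducibility machinery (Corollary \ref{cor.det.irred}, Propositions \ref{prop.UFD.irred=prime}–\ref{prop.locali.div}): show $\det A\mid\det W$ by verifying that $\det W$ vanishes modulo the prime $\det A$ in the generic polynomial ring, using Proposition \ref{prop.intdom.ker-mat} to detect a nonzero kernel vector for $W$ over the integral domain $\mathbb{G}/(\det\overline A)$.

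**The main obstacle.** The hard part will be the last step: establishing the divisibility $\det A\mid\det W'$ when $W'$ is an \emph{entrywise} product rather than a genuine matrix product, so that one cannot directly invoke multiplicativity of the determinant. The factorization $\det W=\pm\,a_{n+1,n+1}^{\,\binom{n}{k}}\cdot\det\left(\text{compound of }A'\right)\cdot(\cdots)$ is not literally available because the $B$-minors are entangled with the $A$-minors inside each entry. The cleanest resolution is to reduce to the generic setting where $\mathbb{K}$ is a polynomial ring over $\mathbb{Z}$ in the entries of $A$ and $B$: there $\det A$ is (up to the unit $a_{n+1,n+1}$, handled separately) a product of the irreducible $\det A'$ and the regular element $a_{n+1,n+1}$. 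One then shows that each irreducible factor divides $\det W$ by exhibiting, modulo that factor, an explicit nonzero null vector $\mathbf u$ of $W$ and invoking Proposition \ref{prop.intdom.ker-mat}; the natural candidate for $\mathbf u$ comes from a linear relation among the rows of $A'$ (equivalently among the $k$-subset minors) that persists after multiplying by the $B$-minors. Transferring the generic divisibility back to arbitrary $\mathbb K$ is then a routine specialization argument.
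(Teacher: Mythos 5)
Your high-level architecture matches the paper's: reduce to the generic polynomial ring over $\mathbb{Z}$, use $\det A = a_{n+1,n+1}\cdot\det\overline{A}$ with $\overline{A}$ the top-left $n\times n$ block, invoke irreducibility of $\det\overline{A}$ (Corollary \ref{cor.det.irred}) to get an integral domain $\mathbb{K}/(\det\overline{A})$, exhibit a nonzero kernel vector of $W$ there and apply Proposition \ref{prop.intdom.ker-mat}, then handle the factor $a_{n+1,n+1}$ separately by coprimality. You also correctly diagnose why the compound-determinant route fails: $W$ is an entrywise (Hadamard) product, so its determinant does not factor.

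However, there is a genuine gap exactly where the real work lies: the construction of the kernel vector $\mathbf{u}$ and the verification that $W\mathbf{u}=0$ modulo $\det\overline{A}$. You say the candidate ``comes from a linear relation among the rows of $A'$ \ldots that persists after multiplying by the $B$-minors,'' but persistence is precisely the problem — an arbitrary relation among the $A$-minors is destroyed by the entrywise multiplication with the $B$-minors, which is the same Hadamard obstruction you identified one paragraph earlier. The paper's resolution is a specific multiplicative construction: set $u_J=\prod_{j\in J}u_j$ where $u_i = z_i(-1)^i\det\left(A_{\sim 1,\sim i}\right)$ for $i\in[n]$ (with $z_i=\prod_{j\neq i}b_{n+1,j}$) and $u_{n+1}=1$, so that $u_i b_{n+1,i}=w\cdot(-1)^i\det\left(A_{\sim 1,\sim i}\right)$ for a fixed scalar $w$. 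With $\Delta=\operatorname{diag}\left(u_1,\ldots,u_{n+1}\right)$, the sum $\sum_{J}\det\left(\operatorname{sub}\nolimits_{I+}^{J+}A\right)\det\left(\operatorname{sub}\nolimits_{I+}^{J+}B\right)u_J$ becomes, via Cauchy--Binet (Lemma \ref{lem.cauchy-binet}) and the fact that a diagonal $\Delta$ rescales rows of $B^T$, the single minor $\det\left(\operatorname{sub}\nolimits_{I+}^{I+}\left(A\Delta B^{T}\right)\right)$; this vanishes because the last column $A\Delta B^{T}e_{n+1}=w\cdot A\mathbf{x}=-w\cdot\det A\cdot e_1$ is zero modulo $\det\overline{A}$ — using both $b_{n+1,n+1}=0$ and $\det\left(A_{\sim 1,\sim(n+1)}\right)=0$, the latter coming from the hypothesis (\ref{eq.lem.bisyl.AdB0.ass-j}). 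None of this is recoverable from your sketch; without the multiplicative form of $u_J$, the diagonal-matrix trick, and the Cauchy--Binet collapse, the claim that a suitable $\mathbf{u}$ exists is unsupported, so the proposal as written does not constitute a proof.
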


The following proof is inspired by \cite[proof of Theorem 10]{GriOlv18}.

\begin{proof}
[Proof of Lemma \ref{lem.bisyl.AdB0}.]We WLOG assume that $\mathbb{K}$ is the
polynomial ring over $\mathbb{Z}$ in $n^{2}+\left(  n+1\right)  +\left(
\left(  n+1\right)  ^{2}-1\right)  $ indeterminates%
\begin{align*}
&  a_{i,j}\ \ \ \ \ \ \ \ \ \ \text{for all }i\in\left[  n\right]  \text{ and
}j\in\left[  n\right]  ;\\
&  a_{i,n+1}\ \ \ \ \ \ \ \ \ \ \text{for all }i\in\left[  n+1\right]  ;\\
&  b_{i,j}\ \ \ \ \ \ \ \ \ \ \text{for all }i\in\left[  n+1\right]  \text{
and }j\in\left[  n+1\right]  \text{ except for }b_{n+1,n+1}.
\end{align*}
And, of course, we assume that the entries of $A$ and $B$ that are not zero by
assumption are these indeterminates.\footnote{These assumptions are
legitimate, because if we can prove Lemma \ref{lem.bisyl.AdB0} under these
assumptions, then the universal property of polynomial rings shows that Lemma
\ref{lem.bisyl.AdB0} holds in the general case.}

The ring $\mathbb{K}$ is a UFD (by Proposition \ref{prop.poly.Zufd}).

We WLOG assume that $n>0$ (otherwise, the result follows from $\det
W=\det\left(
\begin{array}
[c]{c}%
0
\end{array}
\right)  =0$).

The set $P_{k}$ is nonempty (since $k\in\left\{  0,1,\ldots,n\right\}  $);
thus, $\left\vert P_{k}\right\vert \geq1$.

Let $\overline{A}$ be the $n\times n$-matrix $\left(  a_{i,j}\right)  _{1\leq
i\leq n,\ 1\leq j\leq n}\in\mathbb{K}^{n\times n}$. Then, because of
(\ref{eq.lem.bisyl.AdB0.ass-j}), we have
\begin{equation}
\det A=a_{n+1,n+1}\cdot\det\overline{A} \label{pf.lem.bisylAdB0.1}%
\end{equation}
(by \cite[Theorem 6.43]{detnotes}, applied to $n+1$ instead of $n$).

The matrix $\overline{A}$ is a completely generic $n\times n$-matrix (i.e.,
its entries are distinct indeterminates); thus, its determinant $\det
\overline{A}$ is an irreducible polynomial in the polynomial ring
$\mathbb{Z}\left[  a_{i,j}\ \mid\ \left(  i,j\right)  \in\left[  n\right]
^{2}\right]  $ (by Corollary \ref{cor.det.irred}). Hence, $\det\overline{A}$
also is an irreducible polynomial in the ring $\mathbb{K}$ (since $\mathbb{K}$
differs from $\mathbb{Z}\left[  a_{i,j}\ \mid\ \left(  i,j\right)  \in\left[
n\right]  ^{2}\right]  $ only in having more variables, which clearly cannot
contribute any factors to $\det\overline{A}$). Thus, Proposition
\ref{prop.UFD.irred=prime} (applied to $p=\det\overline{A}$) shows that the
quotient ring $\mathbb{K}/\left(  \det\overline{A}\right)  $ is an integral domain.

Let $\mathbb{M}$ be the quotient ring $\mathbb{K}/\left(  \det\overline
{A}\right)  $. Then, $\mathbb{M}$ is an integral domain (since $\mathbb{K}%
/\left(  \det\overline{A}\right)  $ is an integral domain). All monomials in
the variables $b_{i,j}$ (with $\left(  i,j\right)  \neq\left(  n+1,n+1\right)
$) are nonzero in $\mathbb{M}$. Likewise, $a_{n+1,n+1}\neq0$ in $\mathbb{M}$.

Let $w$ be the element $\prod_{j\in\left[  n\right]  }b_{n+1,j}\in\mathbb{M}$.
(Strictly speaking, we mean the canonical projection of $\prod_{j\in\left[
n\right]  }b_{n+1,j}\in\mathbb{K}$ onto the quotient ring $\mathbb{M}$.) Then,
$w$ is a nonzero element of the integral domain $\mathbb{M}$ (since
$b_{n+1,j}\neq0$ in $\mathbb{M}$ for all $j\in\left[  n\right]  $).

For each $i\in\left[  n\right]  $, we define $z_{i}\in\mathbb{M}$ by
$z_{i}=\prod_{\substack{j\in\left[  n\right]  ;\\j\neq i}}b_{n+1,j}$
(projected onto $\mathbb{M}$). This is a nonzero element of $\mathbb{M}$. In
$\mathbb{M}$, we have%
\begin{equation}
b_{n+1,i}z_{i}=b_{n+1,i}\prod_{\substack{j\in\left[  n\right]  ;\\j\neq
i}}b_{n+1,j}=\prod_{j\in\left[  n\right]  }b_{n+1,j}=w
\label{pf.lem.bisyl.AdB0.bnizi}%
\end{equation}
for all $i\in\left[  n\right]  $.

We need another piece of notation: If $M$ is a $p\times q$-matrix, and if
$u\in\left[  p\right]  $ and $v\in\left[  q\right]  $, then $M_{\sim u,\sim
v}$ denotes the $\left(  p-1\right)  \times\left(  q-1\right)  $-matrix
obtained from $M$ by removing the $u$-th row and the $v$-th column.

The matrix $A_{\sim1,\sim\left(  n+1\right)  }$ has determinant $0$ (because
(\ref{eq.lem.bisyl.AdB0.ass-j}) shows that its last row consists of zeroes).
In other words, $\det\left(  A_{\sim1,\sim\left(  n+1\right)  }\right)  =0$.

Also, due to (\ref{eq.lem.bisyl.AdB0.ass-j}), we see that each $i\in\left[
n\right]  $ satisfies
\begin{equation}
\det\left(  A_{\sim1,\sim i}\right)  =a_{n+1,n+1}\cdot\det\left(  \overline
{A}_{\sim1,\sim i}\right)  \label{pf.lem.bisyl.AdB0.detA1i}%
\end{equation}
(by \cite[Theorem 6.43]{detnotes}, applied to $A_{\sim1,\sim i}$ instead of
$A$), because the last row of the matrix $A_{\sim1,\sim i}$ is $\left(
0,0,\ldots,0,a_{n+1,n+1}\right)  $.

For each $i\in\left[  n+1\right]  $, we define an element $u_{i}\in\mathbb{M}$
by%
\[
u_{i}=%
\begin{cases}
z_{i}\left(  -1\right)  ^{i}\det\left(  A_{\sim1,\sim i}\right)  , & \text{if
}i\in\left[  n\right]  ;\\
1, & \text{if }i=n+1
\end{cases}
\ \ .
\]
All these $n+1$ elements $u_{1},u_{2},\ldots,u_{n+1}$ of $\mathbb{M}$ are
nonzero\footnote{\textit{Proof.} Each $i\in\left[  n\right]  $ satisfies%
\begin{align*}
u_{i}  &  =z_{i}\left(  -1\right)  ^{i}\underbrace{\det\left(  A_{\sim1,\sim
i}\right)  }_{\substack{=a_{n+1,n+1}\cdot\det\left(  \overline{A}_{\sim1,\sim
i}\right)  \\\text{(by (\ref{pf.lem.bisyl.AdB0.detA1i}))}}}=\underbrace{z_{i}%
}_{\neq0\text{ in }\mathbb{M}}\underbrace{\left(  -1\right)  ^{i}}%
_{\neq0\text{ in }\mathbb{M}}\underbrace{a_{n+1,n+1}}_{\neq0\text{ in
}\mathbb{M}}\cdot\underbrace{\det\left(  \overline{A}_{\sim1,\sim i}\right)
}_{\substack{\neq0\text{ in }\mathbb{M}\\\text{(since }\det\left(
\overline{A}_{\sim1,\sim i}\right)  \text{ is a polynomial}\\\text{of smaller
degree than }\det\overline{A}\text{, and thus}\\\text{is not a multiple of
}\det\overline{A}\text{)}}}\\
&  \neq0\text{ in }\mathbb{M}%
\end{align*}
(since $\mathbb{M}$ is an integral domain). Thus, $u_{1},u_{2},\ldots,u_{n}$
are nonzero. Moreover, $u_{n+1}$ is nonzero (since $u_{n+1}=1$). Thus, we are
done.}.

Let $\mathbf{u}=\left(  u_{J}\right)  _{J\in P_{k}}\in\mathbb{M}^{P_{k}}$ be
the vector defined by%
\[
u_{J}=\prod_{j\in J}u_{j}.
\]
Then, the entries of the vector $\mathbf{u}$ are nonzero (because they are
products of the nonzero elements $u_{1},u_{2},\ldots,u_{n+1}$ of the integral
domain $\mathbb{M}$). Since the vector $\mathbf{u}$ has at least one entry
(because $\left\vert P_{k}\right\vert \geq1$), we thus conclude that
$\mathbf{u}\neq0$.

Let $\Delta$ be the diagonal matrix $\Delta=\operatorname*{diag}\left(
u_{1},u_{2},\ldots,u_{n+1}\right)  \in\mathbb{M}^{\left(  n+1\right)
\times\left(  n+1\right)  }$.

Let $\mathbf{x}\in\mathbb{M}^{n+1}$ be the column vector defined by%
\[
\mathbf{x}=\left(  \left(  -1\right)  ^{1}\det\left(  A_{\sim1,\sim1}\right)
,\left(  -1\right)  ^{2}\det\left(  A_{\sim1,\sim2}\right)  ,\ldots,\left(
-1\right)  ^{n+1}\det\left(  A_{\sim1,\sim\left(  n+1\right)  }\right)
\right)  ^{T}.
\]

Let $\left(  e_{1},e_{2},\ldots,e_{n+1}\right)  $ be the standard basis of the
free $\mathbb{M}$-module $\mathbb{M}^{n+1}$. Thus, for any $\left(
n+1\right)  \times\left(  n+1\right)  $-matrix $C\in\mathbb{M}^{\left(
n+1\right)  \times\left(  n+1\right)  }$ and any $j\in\left\{  1,2,\ldots
,n+1\right\}  $, we have%
\begin{equation}
\left(  \text{the }j\text{-th column of the matrix }C\right)  =Ce_{j}.
\label{pf.lem.bisyl.AdB0.Bej}%
\end{equation}

Now, using Laplace expansion, it is easy to see that%
\begin{equation}
A\mathbf{x}=-\det A\cdot e_{1}. \label{pf.lem.bisyl.AdB0.lapl}%
\end{equation}

\begin{vershort}
[\textit{Proof of (\ref{pf.lem.bisyl.AdB0.lapl}):} Consider the adjugate
$\operatorname*{adj}A$ of the matrix $A$. A standard fact (\cite[Theorem
6.100]{detnotes}) says that $A\cdot\operatorname*{adj}A=\det A\cdot I_{n+1}$.
But the definition of $\operatorname*{adj}A$ reveals that the first column of
the matrix $\operatorname*{adj}A$ is $-\mathbf{x}$. Hence, the first column of
the matrix $A\cdot\operatorname*{adj}A$ is $A\cdot\left(  -\mathbf{x}\right)
=-A\mathbf{x}$. On the other hand, the first column of the matrix
$A\cdot\operatorname*{adj}A$ is $\det A\cdot e_{1}$ (since $A\cdot
\operatorname*{adj}A=\det A\cdot I_{n+1}$).\ Comparing the preceding two
sentences, we conclude that $-A\mathbf{x}=\det A\cdot e_{1}$, so that
$A\mathbf{x}=-\det A\cdot e_{1}$. This proves (\ref{pf.lem.bisyl.AdB0.lapl}).]
\end{vershort}

\begin{verlong}
[\textit{Proof of (\ref{pf.lem.bisyl.AdB0.lapl}):} The quickest way to check
this is to use the adjugate $\operatorname*{adj}A$ of the matrix $A$. A
standard fact (\cite[Theorem 6.100]{detnotes}) says that $A\cdot
\operatorname*{adj}A=\operatorname*{adj}A\cdot A=\det A\cdot I_{n+1}$. But the
definition of $\operatorname*{adj}A$ shows that the $\left(  i,j\right)  $-th
entry of $\operatorname*{adj}A$ is $\left(  -1\right)  ^{i+j}\det\left(
A_{\sim j,\sim i}\right)  $ for each $i\in\left\{  1,2,\ldots,n+1\right\}  $
and each $j\in\left\{  1,2,\ldots,n+1\right\}  $. Hence, the entries in the
first column of the matrix $\operatorname*{adj}A$ are $\left(  -1\right)
^{1+1}\det\left(  A_{\sim1,\sim1}\right)  ,\left(  -1\right)  ^{2+1}%
\det\left(  A_{\sim1,\sim2}\right)  ,\ldots,\left(  -1\right)  ^{\left(
n+1\right)  +1}\det\left(  A_{\sim1,\sim\left(  n+1\right)  }\right)  $ (from
top to bottom). Thus,
\begin{align*}
&  \left(  \text{the first column of the matrix }\operatorname*{adj}A\right)
\\
&  =\left(  \left(  -1\right)  ^{1+1}\det\left(  A_{\sim1,\sim1}\right)
,\left(  -1\right)  ^{2+1}\det\left(  A_{\sim1,\sim2}\right)  ,\ldots,\left(
-1\right)  ^{\left(  n+1\right)  +1}\det\left(  A_{\sim1,\sim\left(
n+1\right)  }\right)  \right)  ^{T}\\
&  =\left(  -\left(  -1\right)  ^{1}\det\left(  A_{\sim1,\sim1}\right)
,-\left(  -1\right)  ^{2}\det\left(  A_{\sim1,\sim2}\right)  ,\ldots,-\left(
-1\right)  ^{n+1}\det\left(  A_{\sim1,\sim\left(  n+1\right)  }\right)
\right)  ^{T}\\
&  \ \ \ \ \ \ \ \ \ \ \left(  \text{since }\left(  -1\right)  ^{i+1}=-\left(
-1\right)  ^{i}\text{ for each }i\in\left\{  1,2,\ldots,n+1\right\}  \right)
\\
&  =-\underbrace{\left(  \left(  -1\right)  ^{1}\det\left(  A_{\sim1,\sim
1}\right)  ,\left(  -1\right)  ^{2}\det\left(  A_{\sim1,\sim2}\right)
,\ldots,\left(  -1\right)  ^{n+1}\det\left(  A_{\sim1,\sim\left(  n+1\right)
}\right)  \right)  }_{\substack{=\mathbf{x}\\\text{(by the definition of
}\mathbf{x}\text{)}}}\\
&  =-\mathbf{x}.
\end{align*}
Hence,%
\[
-\mathbf{x}=\left(  \text{the first column of the matrix }\operatorname*{adj}%
A\right)  =\left(  \operatorname*{adj}A\right)  e_{1}%
\]
(by (\ref{pf.lem.bisyl.AdB0.Bej}), applied to $C=\operatorname*{adj}A$ and
$j=1$). Now,%
\[
A\underbrace{\mathbf{x}}_{=-\left(  -\mathbf{x}\right)  }=A\left(  -\left(
-\mathbf{x}\right)  \right)  =-A\underbrace{\left(  -\mathbf{x}\right)
}_{=\left(  \operatorname*{adj}A\right)  e_{1}}=-\underbrace{A\cdot
\operatorname*{adj}A}_{=\det A\cdot I_{n+1}}\cdot e_{1}=-\det A\cdot e_{1}.
\]
This proves (\ref{pf.lem.bisyl.AdB0.lapl}).]
\end{verlong}

Also, (\ref{pf.lem.bisyl.AdB0.Bej}) (applied to $C=B^{T}$ and $j=n+1$) yields%
\[
B^{T}e_{n+1}=\left(  \text{the }\left(  n+1\right)  \text{-st column of the
matrix }B^{T}\right)  =\left(  b_{n+1,1},b_{n+1,2},\ldots,b_{n+1,n+1}\right)
^{T}.
\]
Hence,%
\begin{align}
\Delta B^{T}e_{n+1}  &  =\Delta\left(  b_{n+1,1},b_{n+1,2},\ldots
,b_{n+1,n+1}\right)  ^{T}\nonumber\\
&  =\left(  u_{1}b_{n+1,1},u_{2}b_{n+1,2},\ldots,u_{n+1}b_{n+1,n+1}\right)
^{T} \label{pf.lem.bisyl.AdB0.DelBTe}%
\end{align}
(since $\Delta=\operatorname*{diag}\left(  u_{1},u_{2},\ldots,u_{n+1}\right)
$).

Now, we claim that%
\begin{equation}
u_{i}b_{n+1,i}=w\cdot\left(  -1\right)  ^{i}\det\left(  A_{\sim1,\sim
i}\right)  \ \ \ \ \ \ \ \ \ \ \text{for each }i\in\left[  n+1\right]  .
\label{pf.lem.bisyl.AdB0.uibi}%
\end{equation}

\begin{vershort}
[\textit{Proof of (\ref{pf.lem.bisyl.AdB0.uibi}):} Let $i\in\left[
n+1\right]  $. If $i=n+1$, then both sides of (\ref{pf.lem.bisyl.AdB0.uibi})
are zero (because $b_{n+1,n+1}=0$ and $\det\left(  A_{\sim1,\sim\left(
n+1\right)  }\right)  =0$). If $i\neq n+1$, then $i\in\left[  n\right]  $ and
thus%
\begin{align*}
\underbrace{u_{i}}_{\substack{=z_{i}\left(  -1\right)  ^{i}\det\left(
A_{\sim1,\sim i}\right)  \\\text{(by the definition of }u_{i}\text{)}%
}}b_{n+1,i}  &  =z_{i}\left(  -1\right)  ^{i}\det\left(  A_{\sim1,\sim
i}\right)  b_{n+1,i}=\underbrace{b_{n+1,i}z_{i}}_{\substack{=w\\\text{(by
(\ref{pf.lem.bisyl.AdB0.bnizi}))}}}\left(  -1\right)  ^{i}\det\left(
A_{\sim1,\sim i}\right) \\
&  =w\cdot\left(  -1\right)  ^{i}\det\left(  A_{\sim1,\sim i}\right)  .
\end{align*}
Hence, (\ref{pf.lem.bisyl.AdB0.uibi}) is proven in both cases.]
\end{vershort}

\begin{verlong}
[\textit{Proof of (\ref{pf.lem.bisyl.AdB0.uibi}):} Let $i\in\left[
n+1\right]  $. We must prove (\ref{pf.lem.bisyl.AdB0.uibi}). If $i=n+1$, then
this is easy (indeed, in this case, both sides are zero, because
$b_{n+1,n+1}=0$ and $\det\left(  A_{\sim1,\sim\left(  n+1\right)  }\right)
=0$). Hence, we WLOG assume that $i\neq n+1$. Hence, $i\in\left[  n\right]  $.
Thus, the definition of $u_{i}$ yields $u_{i}=z_{i}\left(  -1\right)  ^{i}%
\det\left(  A_{\sim1,\sim i}\right)  $. Hence,%
\begin{align*}
u_{i}b_{n+1,i}  &  =z_{i}\left(  -1\right)  ^{i}\det\left(  A_{\sim1,\sim
i}\right)  b_{n+1,i}=\underbrace{b_{n+1,i}z_{i}}_{\substack{=w\\\text{(by
(\ref{pf.lem.bisyl.AdB0.bnizi}))}}}\left(  -1\right)  ^{i}\det\left(
A_{\sim1,\sim i}\right) \\
&  =w\cdot\left(  -1\right)  ^{i}\det\left(  A_{\sim1,\sim i}\right)  .
\end{align*}
This proves (\ref{pf.lem.bisyl.AdB0.uibi}).]
\end{verlong}

Now, (\ref{pf.lem.bisyl.AdB0.DelBTe}) becomes%
\begin{align*}
&  \Delta B^{T}e_{n+1}\\
&  =\left(  u_{1}b_{n+1,1},u_{2}b_{n+1,2},\ldots,u_{n+1}b_{n+1,n+1}\right)
^{T}\\
&  =\left(  w\cdot\left(  -1\right)  ^{1}\det\left(  A_{\sim1,\sim1}\right)
,w\cdot\left(  -1\right)  ^{2}\det\left(  A_{\sim1,\sim2}\right)
,\ldots,w\cdot\left(  -1\right)  ^{n+1}\det\left(  A_{\sim1,\sim\left(
n+1\right)  }\right)  \right)  ^{T}\\
&  \ \ \ \ \ \ \ \ \ \ \left(  \text{by (\ref{pf.lem.bisyl.AdB0.uibi})}\right)
\\
&  =w\cdot\underbrace{\left(  \left(  -1\right)  ^{1}\det\left(  A_{\sim
1,\sim1}\right)  ,\left(  -1\right)  ^{2}\det\left(  A_{\sim1,\sim2}\right)
,\ldots,\left(  -1\right)  ^{n+1}\det\left(  A_{\sim1,\sim\left(  n+1\right)
}\right)  \right)  ^{T}}_{\substack{=\mathbf{x}\\\text{(by the definition of
}\mathbf{x}\text{)}}}\\
&  =w\mathbf{x}.
\end{align*}
Hence,%
\begin{align*}
A\Delta B^{T}e_{n+1}  &  =Aw\mathbf{x}=w\cdot\underbrace{A\mathbf{x}%
}_{\substack{=-\det A\cdot e_{1}\\\text{(by (\ref{pf.lem.bisyl.AdB0.lapl}))}%
}}=-w\cdot\underbrace{\det A}_{\substack{=a_{n+1,n+1}\cdot\det\overline
{A}\\\text{(by (\ref{pf.lem.bisylAdB0.1}))}}}\cdot e_{1}\\
&  =-w\cdot a_{n+1,n+1}\cdot\underbrace{\det\overline{A}}%
_{\substack{=0\\\text{(since we are in }\mathbb{M}\text{)}}}\cdot e_{1}=0.
\end{align*}
In other words, the $\left(  n+1\right)  $-st column of the matrix $A\Delta
B^{T}$ is $0$ (since the $\left(  n+1\right)  $-st column of the matrix
$A\Delta B^{T}$ is $A\Delta B^{T}e_{n+1}$ (by (\ref{pf.lem.bisyl.AdB0.Bej}),
applied to $C=A\Delta B^{T}$ and $j=n+1$)).

Now, fix $I\in P_{k}$. Then, the last column of the matrix
$\operatorname*{sub}\nolimits_{I+}^{I+}\left(  A\Delta B^{T}\right)  $ is $0$
(because this column is a piece of the $\left(  n+1\right)  $-st column of the
matrix $A\Delta B^{T}$, but as we have just shown the latter column is $0$).
Thus, $\det\left(  \operatorname*{sub}\nolimits_{I+}^{I+}\left(  A\Delta
B^{T}\right)  \right)  =0$.

\begin{vershort}
But Lemma \ref{lem.cauchy-binet} (applied to $\mathbb{M}$, $n+1$, $n+1$,
$n+1$, $\Delta B^{T}$, $k+1$, $I+$ and $I+$ instead of $\mathbb{K}$, $n$, $m$,
$p$, $B$, $k$, $P$ and $Q$) yields%
\[
\det\left(  \operatorname*{sub}\nolimits_{I+}^{I+}\left(  A\Delta
B^{T}\right)  \right)  =\sum_{\substack{R\subseteq\left[  n+1\right]
;\\\left\vert R\right\vert =k+1}}\det\left(  \operatorname*{sub}%
\nolimits_{I+}^{R}A\right)  \det\left(  \operatorname*{sub}\nolimits_{R}%
^{I+}\left(  \Delta B^{T}\right)  \right)  .
\]
Comparing this with $\det\left(  \operatorname*{sub}\nolimits_{I+}^{I+}\left(
A\Delta B^{T}\right)  \right)  =0$, we obtain%
\[
0=\sum_{\substack{R\subseteq\left[  n+1\right]  ;\\\left\vert R\right\vert
=k+1}}\det\left(  \operatorname*{sub}\nolimits_{I+}^{R}A\right)  \det\left(
\operatorname*{sub}\nolimits_{R}^{I+}\left(  \Delta B^{T}\right)  \right)  .
\]
In the sum on the right hand side, all addends for which $n+1\notin R$ are
zero (because if $R\subseteq\left[  n+1\right]  $ satisfies $\left\vert
R\right\vert =k+1$ and $n+1\notin R$, then the last row of the matrix
$\operatorname*{sub}\nolimits_{I+}^{R}A$ consists of zeroes\footnote{by
(\ref{eq.lem.bisyl.AdB0.ass-j}), since $n+1\notin R$ but $n+1\in I+$}, and
therefore we have $\det\left(  \operatorname*{sub}\nolimits_{I+}^{R}A\right)
=0$), and thus can be discarded. Hence, we are left with%
\[
0=\sum_{\substack{R\subseteq\left[  n+1\right]  ;\\\left\vert R\right\vert
=k+1;\\n+1\in R}}\det\left(  \operatorname*{sub}\nolimits_{I+}^{R}A\right)
\det\left(  \operatorname*{sub}\nolimits_{R}^{I+}\left(  \Delta B^{T}\right)
\right)  .
\]
But the subsets $R$ of $\left[  n+1\right]  $ satisfying $\left\vert
R\right\vert =k+1$ and $n+1\in R$ can be parametrized as $J+$ with $J$ ranging
over $P_{k}$. Hence, this rewrites further as%
\[
0=\sum_{J\in P_{k}}\det\left(  \operatorname*{sub}\nolimits_{I+}^{J+}A\right)
\det\left(  \operatorname*{sub}\nolimits_{J+}^{I+}\left(  \Delta B^{T}\right)
\right)  .
\]
It is easily seen that $\det\left(  \operatorname*{sub}\nolimits_{J+}%
^{I+}\left(  \Delta B^{T}\right)  \right)  =\det\left(  \operatorname*{sub}%
\nolimits_{I+}^{J+}B\right)  u_{J}$ for each $J\in P_{k}$ (indeed, recall the
definition of $\Delta$ and the fact that $u_{n+1}=1$ and that $\det\left(
C^{T}\right)  =\det C$ for each square matrix $C$). Thus, the above equality
simplifies to
\[
0=\sum_{J\in P_{k}}\det\left(  \operatorname*{sub}\nolimits_{I+}^{J+}A\right)
\det\left(  \operatorname*{sub}\nolimits_{I+}^{J+}B\right)  u_{J}.
\]

\end{vershort}

\begin{verlong}
But $A\Delta B^{T}=A\left(  \Delta B^{T}\right)  $. Hence, Lemma
\ref{lem.cauchy-binet} (applied to $\mathbb{M}$, $n+1$, $n+1$, $n+1$, $\Delta
B^{T}$, $k+1$, $I+$ and $I+$ instead of $\mathbb{K}$, $n$, $m$, $p$, $B$, $k$,
$P$ and $Q$) yields%
\[
\det\left(  \operatorname*{sub}\nolimits_{I+}^{I+}\left(  A\Delta
B^{T}\right)  \right)  =\sum_{\substack{R\subseteq\left[  n+1\right]
;\\\left\vert R\right\vert =k+1}}\det\left(  \operatorname*{sub}%
\nolimits_{I+}^{R}A\right)  \det\left(  \operatorname*{sub}\nolimits_{R}%
^{I+}\left(  \Delta B^{T}\right)  \right)  .
\]
Comparing this with $\det\left(  \operatorname*{sub}\nolimits_{I+}^{I+}\left(
A\Delta B^{T}\right)  \right)  =0$, we obtain%
\begin{align*}
0  &  =\sum_{\substack{R\subseteq\left[  n+1\right]  ;\\\left\vert
R\right\vert =k+1}}\det\left(  \operatorname*{sub}\nolimits_{I+}^{R}A\right)
\det\left(  \operatorname*{sub}\nolimits_{R}^{I+}\left(  \Delta B^{T}\right)
\right) \\
&  =\sum_{\substack{R\subseteq\left[  n+1\right]  ;\\\left\vert R\right\vert
=k+1;\\n+1\in R}}\det\left(  \operatorname*{sub}\nolimits_{I+}^{R}A\right)
\det\left(  \operatorname*{sub}\nolimits_{R}^{I+}\left(  \Delta B^{T}\right)
\right) \\
&  \ \ \ \ \ \ \ \ \ \ +\sum_{\substack{R\subseteq\left[  n+1\right]
;\\\left\vert R\right\vert =k+1;\\n+1\notin R}}\underbrace{\det\left(
\operatorname*{sub}\nolimits_{I+}^{R}A\right)  }_{\substack{=0\\\text{(because
the last row of the}\\\text{matrix }\operatorname*{sub}\nolimits_{I+}%
^{R}A\text{ consists of zeroes}\\\text{(by (\ref{eq.lem.bisyl.AdB0.ass-j}),
since }n+1\notin R\text{, but }n+1\in I+\text{))}}}\det\left(
\operatorname*{sub}\nolimits_{R}^{I+}\left(  \Delta B^{T}\right)  \right) \\
&  =\sum_{\substack{R\subseteq\left[  n+1\right]  ;\\\left\vert R\right\vert
=k+1;\\n+1\in R}}\det\left(  \operatorname*{sub}\nolimits_{I+}^{R}A\right)
\det\left(  \operatorname*{sub}\nolimits_{R}^{I+}\left(  \Delta B^{T}\right)
\right) \\
&  =\sum_{J\in P_{k}}\det\left(  \operatorname*{sub}\nolimits_{I+}%
^{J+}A\right)  \underbrace{\det\left(  \operatorname*{sub}\nolimits_{J+}%
^{I+}\left(  \Delta B^{T}\right)  \right)  }_{\substack{=\left(  \prod_{j\in
J+}u_{j}\right)  \det\left(  \operatorname*{sub}\nolimits_{J+}^{I+}\left(
B^{T}\right)  \right)  \\\text{(since }\Delta\text{ is a diagonal
matrix,}\\\text{and thus }\Delta B^{T}\text{ is just }B^{T}\text{ with
rescaled rows)}}}\\
&  \ \ \ \ \ \ \ \ \ \ \left(
\begin{array}
[c]{c}%
\text{since the subsets }R\text{ of }\left[  n+1\right]  \text{ satisfying
}\left\vert R\right\vert =k+1\\
\text{and }n+1\in R\text{ can be parametrized as }J+\\
\text{with }J\text{ ranging over }P_{k}%
\end{array}
\right) \\
&  =\sum_{J\in P_{k}}\det\left(  \operatorname*{sub}\nolimits_{I+}%
^{J+}A\right)  \underbrace{\left(  \prod_{j\in J+}u_{j}\right)  }%
_{\substack{=\prod_{j\in J}u_{j}\\\text{(since }u_{n+1}=1\text{)}%
}}\underbrace{\det\left(  \operatorname*{sub}\nolimits_{J+}^{I+}\left(
B^{T}\right)  \right)  }_{\substack{=\det\left(  \operatorname*{sub}%
\nolimits_{I+}^{J+}B\right)  \\\text{(since }\det\left(  C^{T}\right)  =\det
C\\\text{for any square matrix }C\text{)}}}
\end{align*}%
\begin{align*}
&  =\sum_{J\in P_{k}}\det\left(  \operatorname*{sub}\nolimits_{I+}%
^{J+}A\right)  \underbrace{\left(  \prod_{j\in J}u_{j}\right)  }%
_{\substack{=u_{J}\\\text{(by the definition of }u_{J}\text{)}}}\det\left(
\operatorname*{sub}\nolimits_{I+}^{J+}B\right) \\
&  =\sum_{J\in P_{k}}\det\left(  \operatorname*{sub}\nolimits_{I+}%
^{J+}A\right)  \det\left(  \operatorname*{sub}\nolimits_{I+}^{J+}B\right)
u_{J}.
\end{align*}

\end{verlong}

Now, forget that we fixed $I$. We thus have proven that%
\begin{equation}
0=\sum_{J\in P_{k}}\det\left(  \operatorname*{sub}\nolimits_{I+}^{J+}A\right)
\det\left(  \operatorname*{sub}\nolimits_{I+}^{J+}B\right)  u_{J}
\label{pf.lem.bisyl.AdB0.Wu=0}%
\end{equation}
for each $I\in P_{k}$. This rewrites as $W\mathbf{u}=0$ (indeed, the left hand
side of (\ref{pf.lem.bisyl.AdB0.Wu=0}) is the $I$-th entry of the zero vector
$0$, whereas the right hand side of (\ref{pf.lem.bisyl.AdB0.Wu=0}) is the
$I$-th entry of $W\mathbf{u}$).

Now, consider the matrix $W$ as a matrix in $\mathbb{M}^{P_{k}\times P_{k}}$.
Then, Proposition \ref{prop.intdom.ker-mat} (applied to $P=P_{k}$) yields
$\det W=0$ in $\mathbb{M}$ (since $\mathbf{u}\neq0$ and $W\mathbf{u}=0$). In
view of the definition of $\mathbb{M}$, this rewrites as $\det\overline{A}%
\mid\det W$ in $\mathbb{K}$.

Let us consider the matrix $W$ again as a matrix over $\mathbb{K}$. Each entry
of $W$ has the form $\det\left(  \operatorname*{sub}\nolimits_{I+}%
^{J+}A\right)  \det\left(  \operatorname*{sub}\nolimits_{I+}^{J+}B\right)  $
for some $I,J\in P_{k}$. Thus, all entries of $W$ are multiples of
$a_{n+1,n+1}$ (since $\det\left(  \operatorname*{sub}\nolimits_{I+}%
^{J+}A\right)  $ is a multiple of $a_{n+1,n+1}$ for all $I,J\in P_{k}%
$\ \ \ \ \footnote{\textit{Proof.} Let $I,J\in P_{k}$. Then, the equality
(\ref{eq.lem.bisyl.AdB0.ass-j}) shows that the last row of the matrix
$\operatorname*{sub}\nolimits_{I+}^{J+}A$ is $\left(  0,0,\ldots
,0,a_{n+1,n+1}\right)  $. Hence, an application of \cite[Theorem
6.43]{detnotes} shows that $\det\left(  \operatorname*{sub}\nolimits_{I+}%
^{J+}A\right)  =a_{n+1,n+1}\det\left(  \operatorname*{sub}\nolimits_{I}%
^{J}A\right)  $. Thus, $\det\left(  \operatorname*{sub}\nolimits_{I+}%
^{J+}A\right)  $ is a multiple of $a_{n+1,n+1}$, qed.}). Hence, the
determinant of $W$ is a multiple of $\left(  a_{n+1,n+1}\right)  ^{\left\vert
P_{k}\right\vert }$, thus a multiple of $a_{n+1,n+1}$ (since $\left\vert
P_{k}\right\vert \geq1$). In other words, $a_{n+1,n+1}\mid\det W$ in
$\mathbb{K}$.

Recall that $\mathbb{K}$ is a UFD. Also, the two polynomials $a_{n+1,n+1}$ and
$\det\overline{A}$ in $\mathbb{K}$ both have content $1$, and don't have any
indeterminates in common; thus, these two polynomials are coprime (by
Proposition \ref{prop.content.coprime}). Hence, any polynomial in $\mathbb{K}$
that is divisible by both $a_{n+1,n+1}$ and $\det\overline{A}$ must be
divisible by the product $a_{n+1,n+1}\cdot\det\overline{A}$ as well. Thus,
from $a_{n+1,n+1}\mid\det W$ and $\det\overline{A}\mid\det W$, we obtain
$a_{n+1,n+1}\cdot\det\overline{A}\mid\det W$. In view of
(\ref{pf.lem.bisylAdB0.1}), this rewrites as $\det A\mid\det W$. This proves
Lemma \ref{lem.bisyl.AdB0}.
\end{proof}

We shall now derive Theorem \ref{thm.bisyl.AB0} from Lemma
\ref{lem.bisyl.AdB0}, following the same idea as in \cite[\S 2.7]{Prasolov}
and \cite[Teorema 2.9.1]{Prasol15} and \cite{Mohr53}:

\begin{proof}
[Proof of Theorem \ref{thm.bisyl.B0}.]We WLOG assume that $n>0$ (otherwise,
the result follows from $\det W=\det\left(
\begin{array}
[c]{c}%
0
\end{array}
\right)  =0$).

We WLOG assume that $\mathbb{K}$ is the polynomial ring over $\mathbb{Z}$ in
$\left(  n+1\right)  ^{2}+\left(  \left(  n+1\right)  ^{2}-1\right)  $
indeterminates%
\begin{align*}
&  a_{i,j}\ \ \ \ \ \ \ \ \ \ \text{for all }i\in\left[  n+1\right]  \text{
and }j\in\left[  n+1\right]  ;\\
&  b_{i,j}\ \ \ \ \ \ \ \ \ \ \text{for all }i\in\left[  n+1\right]  \text{
and }j\in\left[  n+1\right]  \text{ except for }b_{n+1,n+1}.
\end{align*}
And, of course, we assume that the entries of $A$ and $B$ that are not zero by
assumption are these indeterminates. Proposition \ref{prop.poly.Zufd} shows
that the ring $\mathbb{K}$ is a UFD (since it is a polynomial ring over
$\mathbb{Z}$).

Let $S$ be the multiplicative subset $\left\{  a_{n+1,n+1}^{p}\mid
p\in\mathbb{N}\right\}  $ of $\mathbb{K}$. Then, all elements of $S$ are
regular (since they are monomials in a polynomial ring).

Let $\mathbb{L}$ be the localization of the commutative ring $\mathbb{K}$ at
the multiplicative subset $S$. Then, Proposition \ref{prop.locali.div}
\textbf{(a)} shows that the canonical ring homomorphism from $\mathbb{K}$ to
$\mathbb{L}$ is injective; we shall thus consider it as an embedding. Also,
Proposition \ref{prop.locali.div} \textbf{(b)} shows that $\mathbb{L}$ is an
integral domain.

We claim that%
\begin{equation}
\det A\mid\det W\text{ in }\mathbb{L}. \label{pf.thm.bisyl.B0.inL}%
\end{equation}

[\textit{Proof of (\ref{pf.thm.bisyl.B0.inL}):} Consider $A$, $B$ and $W$ as
matrices over $\mathbb{L}$. The entry $a_{n+1,n+1}$ of $A$ is invertible in
$\mathbb{L}$ (by the construction of $\mathbb{L}$). Hence, we can subtract
appropriate scalar multiples\footnote{The scalars, of course, come from
$\mathbb{L}$ here.} of the $\left(  n+1\right)  $-st column of $A$ from each
other column of $A$ to ensure that all entries of the last row of $A$ become
$0$, except for $a_{n+1,n+1}$. (Specifically, for each $j\in\left[  n\right]
$, we have to subtract $a_{n+1,j}/a_{n+1,n+1}$ times the $\left(  n+1\right)
$-st column of $A$ from the $j$-th column of $A$.) All these column
transformations preserve the determinant $\det A$, and also preserve the
minors $\det\left(  \operatorname*{sub}\nolimits_{I+}^{J+}A\right)  $ for all
$I,J\in P_{k}$ (because when the $\left(  n+1\right)  $-st column of $A$ is
subtracted from another column of $A$, the matrix $\operatorname*{sub}%
\nolimits_{I+}^{J+}A$ either stays the same or undergoes an analogous column
transformation\footnote{Here we are using the fact that $n+1\in J+$ (so that
the matrix $\operatorname*{sub}\nolimits_{I+}^{J+}A$ contains part of the
$\left(  n+1\right)  $-st column of $A$).}, which preserves its determinant);
thus, they preserve the matrix $W$. Hence, we can replace $A$ by the result of
these transformations. This new matrix $A$ satisfies
(\ref{eq.lem.bisyl.AdB0.ass-j}). Hence, Lemma \ref{lem.bisyl.AdB0} (applied to
$\mathbb{L}$ instead of $\mathbb{K}$) yields that $\det A\mid\det W$ in
$\mathbb{L}$. This proves (\ref{pf.thm.bisyl.B0.inL}).]

But we must prove that $\det A\mid\det W$ in $\mathbb{K}$. Fortunately, this
is easy: Since $\mathbb{K}$ embeds into $\mathbb{L}$, we can translate our
result \textquotedblleft$\det A\mid\det W$ in $\mathbb{L}$\textquotedblright%
\ as \textquotedblleft$\det A\mid a_{n+1,n+1}^{p}\det W$ in $\mathbb{K}$ for
an appropriate $p\in\mathbb{N}$\textquotedblright\ (by Proposition
\ref{prop.locali.div} \textbf{(c)}, applied to $a=\det A$ and $b=\det W$).
Consider this $p$. The polynomial $a_{n+1,n+1}\in\mathbb{K}$ is coprime to
$\det A$ (this is easily checked\footnote{\textit{Proof.} The polynomial $\det
A$ contains the monomial $a_{1,n+1}a_{2,n}\cdots a_{n+1,1}=\prod_{i\in\left[
n+1\right]  }a_{i,n+2-i}$, and thus is not a multiple of $a_{n+1,n+1}$. Hence,
it is coprime to $a_{n+1,n+1}$ (since the only non-unit divisor of
$a_{n+1,n+1}$ is $a_{n+1,n+1}$ itself, up to scaling by units).}); thus, its
power $a_{n+1,n+1}^{p}$ is coprime to $\det A$ as well. Hence, we can cancel
the $a_{n+1,n+1}^{p}$ from the divisibility $\det A\mid a_{n+1,n+1}^{p}\det
W$, and conclude that $\det A\mid\det W$ in $\mathbb{K}$. This proves Theorem
\ref{thm.bisyl.B0}.
\end{proof}

\begin{proof}
[Proof of Theorem \ref{thm.bisyl.AB0}.]We WLOG assume that $\mathbb{K}$ is the
polynomial ring over $\mathbb{Z}$ in the $\left(  \left(  n+1\right)
^{2}-1\right)  +\left(  \left(  n+1\right)  ^{2}-1\right)  $ indeterminates%
\begin{align*}
&  a_{i,j}\ \ \ \ \ \ \ \ \ \ \text{for all }i\in\left[  n+1\right]  \text{
and }j\in\left[  n+1\right]  \text{ except for }a_{n+1,n+1};\\
&  b_{i,j}\ \ \ \ \ \ \ \ \ \ \text{for all }i\in\left[  n+1\right]  \text{
and }j\in\left[  n+1\right]  \text{ except for }b_{n+1,n+1}.
\end{align*}
And, of course, we assume that the entries of $A$ and $B$ that are not zero by
assumption are these indeterminates. The ring $\mathbb{K}$ is a UFD (by
Proposition \ref{prop.poly.Zufd}).

WLOG assume that $n>0$ (otherwise, the result follows from $\det W=\det\left(
\begin{array}
[c]{c}%
0
\end{array}
\right)  =0$). Thus, the monomial $a_{1,n+1}a_{2,n}\cdots a_{n+1,1}%
=\prod_{i\in\left[  n+1\right]  }a_{i,n+2-i}$ occurs in the polynomial $\det
A$ with coefficient $\pm1$. Hence, the polynomial $\det A$ has content $1$.
Similarly, the polynomial $\det B$ has content $1$.

Theorem \ref{thm.bisyl.B0} yields $\det A\mid\det W$. The same argument yields
$\det B\mid\det W$ (since the matrices $A$ and $B$ play symmetric roles in the
construction of $W$). But Proposition \ref{prop.content.coprime} shows that
the polynomials $\det A$ and $\det B$ in $\mathbb{K}$ are coprime (because
they have content $1$, and don't have any indeterminates in common). Thus, any
polynomial in $\mathbb{K}$ that is divisible by both $\det A$ and $\det B$
must be divisible by the product $\left(  \det A\right)  \left(  \det
B\right)  $ as well. Thus, from $\det A\mid\det W$ and $\det B\mid\det W$, we
obtain $\left(  \det A\right)  \left(  \det B\right)  \mid\det W$. This proves
Theorem \ref{thm.bisyl.AB0}.
\end{proof}

\section{Further questions}

While Theorems \ref{thm.bisyl.B0} and \ref{thm.bisyl.AB0} are now proven, the
field appears far from fully harvested. Three questions readily emerge:

\begin{quest}
What can be said about $\dfrac{\det W}{\det A}$ (in Theorem \ref{thm.bisyl.B0}%
) and $\dfrac{\det W}{\left(  \det A\right)  \left(  \det B\right)  }$ (in
Theorem \ref{thm.bisyl.AB0})? Are there formulas?
\end{quest}

\begin{quest}
Are there more direct proofs of Theorems \ref{thm.bisyl.B0} and
\ref{thm.bisyl.AB0}, avoiding the use of polynomial rings and their properties
and instead \textquotedblleft staying inside $\mathbb{K}$\textquotedblright?
Such proofs might help answer the previous question.
\end{quest}

\begin{quest}
The entries of our matrix $W$ were products of minors of two $\left(
n+1\right)  \times\left(  n+1\right)  $-matrices that each use the last row
and the last column. What can be said about products of minors of two $\left(
n+m\right)  \times\left(  n+m\right)  $-matrices that each use the last $m$
rows and the last $m$ columns, where $m$ is an arbitrary positive integer? The
\textquotedblleft Generalized Sylvester's identity\textquotedblright\ in
\cite[\S 2.7]{Prasolov} answers this for the case of one matrix. It is not
quite obvious what the right analogues of the conditions $a_{n+1,n+1}=0$ and
$b_{n+1,n+1}=0$ are; furthermore, nontrivial examples become even more
computationally challenging.
\end{quest}

\end{document}